\newcommand{\norm}[1]{\left\lVert#1\right\rVert}
\pgfplotsset{compat=newest}
\pgfplotsset{plot coordinates/math parser=false}
\pgfplotsset{compat=newest}
\pgfplotsset{plot coordinates/math parser=false}
\newtheorem{lemma}{Lemma}
\newtheorem{theorem}{Theorem}
\newtheorem{assumption}{Assumption}
\newtheorem{corollary}{Corollary}
\newtheorem{remark}{Remark}
\newtheorem{proposition}{Proposition}
\newcommand{\FS}[1][\epsilon^x]{{\mathbb{X}(#1)}}
\newcommand{\WS}[1][\epsilon^w]{{\mathbb{W}(#1)}}
\newcommand{\za}{\mathrm{z}}
\newcommand{\A}{A}
\newcommand{\B}{B}
\newcommand{\C}{C}
\newcommand{\D}{D}
\newcommand{\R}{\mathbb{R}}
\newcommand{\Y}{\mathcal{Y}}
\newcommand{\W}{\mathcal{W}}
\newcommand{\X}{\mathcal{X}}
\newcommand{\csf}{\boldsymbol{c}}
\newcommand{\bsf}{\boldsymbol{b}}
\newcommand{\dsf}{\boldsymbol{d}}
\newcommand{\fsf}{\boldsymbol{f}}
\newcommand{\qsf}{\boldsymbol{q}}
\newcommand{\lsfb}{\boldsymbol{n}}
\newcommand{\msfb}{\boldsymbol{p}}
\newcommand{\gsfb}{\boldsymbol{g}^{\mathrm{O}}}
\newcommand{\lsfy}{\boldsymbol{l}}
\newcommand{\msfy}{\boldsymbol{m}}
\newcommand{\1}{\mathbf{1}}
\newcommand{\0}{\mathbf{0}}
\newcommand{\I}{\mathbf{I}}
\definecolor{blue_set}{RGB}{204,229.5,255}
\definecolor{pink_set}{RGB}{255,204,229.5}
\definecolor{green_set}{RGB}{229.5,255,204}
\definecolor{grey_set}{RGB}{102,102,102}
\definecolor{cyan_set}{RGB}{69,243,248}
\definecolor{yellow_set}{RGB}{229.5000,229.5000,25.5000}
\definecolor{green_border}{RGB}{0,255,0}
\definecolor{set_blue}{RGB}{25.5000,153.0000,204.0000}
\definecolor{set_green}{RGB}{25.5000,153.0000,25.5000}
\definecolor{Y_clr}{RGB}{204,204,204}
\definecolor{Y1_clr}{RGB}{40,39,38}
\definecolor{Y2_clr}{RGB}{255,237,102}
\definecolor{W_clr}{RGB}{246,114,128}
\definecolor{X1_clr}{RGB}{0,116,63}
\definecolor{X2_clr}{RGB}{248,177,149}
\newcommand\munderbar[1]{\underaccent{\bar}{#1}}
\newcommand\scalemath[2]{\scalebox{#1}{\mbox{\ensuremath{\displaystyle #2}}}}
\definecolor{wheat}{rgb}{0.96,0.87,0.70}
\definecolor{mario}{rgb}{0.8,0.8,1}
\definecolor{SamComm}{rgb}{0.9,0.9,0.1}
\definecolor{ao}{rgb}{0.0, 0.5, 0.0}
\pgfplotsset{compat=1.8}
\definecolor{rulecolor}{RGB}{0,71,171}
\definecolor{tableheadcolor}{gray}{0.92}
\newcommand{\topline}{ %
	\arrayrulecolor{rulecolor}\specialrule{0.1em}{\abovetopsep}{0pt}%
	\arrayrulecolor{tableheadcolor}\specialrule{\belowrulesep}{0pt}{0pt}%
	\arrayrulecolor{rulecolor}}
\newcommand{\midtopline}{ %
	\arrayrulecolor{tableheadcolor}\specialrule{\aboverulesep}{0pt}{0pt}%
	\arrayrulecolor{rulecolor}\specialrule{\lightrulewidth}{0pt}{0pt}%
	\arrayrulecolor{white}\specialrule{\belowrulesep}{0pt}{0pt}%
	\arrayrulecolor{rulecolor}}
\newcommand{\bottomline}{ %
	\arrayrulecolor{white}\specialrule{\aboverulesep}{0pt}{0pt}%
	\arrayrulecolor{rulecolor} %
	\specialrule{\heavyrulewidth}{0pt}{\belowbottomsep}}%
\newcommand{\SamComm}[1]{
	\begin{center}
		\fcolorbox{SamComm}{SamComm}{\parbox[t]{0.9\linewidth}{\textbf{Sampath:} #1}}
\end{center}}
\title{\LARGE \bf
	Computation of Input Disturbance Sets for Constrained Output Reachability
}
\author{Sampath Kumar Mulagaleti, Alberto Bemporad, and Mario Zanon
	\thanks{The authors are with
		the IMT School for Advanced Studies Lucca, Piazza San Francesco 19,
		55100 Lucca, Italy. 
		Email: 
		{\tt\small s.mulagaleti@imtlucca.it}}
}
\begin{document}

	\maketitle
	\thispagestyle{empty}
	\pagestyle{empty}
	
	
	\begin{abstract}
		Linear models with additive unknown-but-bounded input disturbances are extensively used to model uncertainty in robust control systems design. Typically, the disturbance set is either assumed to be known a priori or estimated from data through set-membership identification. However, the problem of computing a suitable input disturbance set in case the set of possible output values is assigned a priori has received relatively little attention. This problem arises in many contexts, such as in supervisory control, actuator design, decentralized control, and others. 
		In this paper, we propose a method to compute input disturbance sets (and the corresponding set of states) such that the resulting set of outputs matches as closely as possible a given set of outputs, while additionally satisfying strict (inner or outer) inclusion constraints.
		We formulate the problem as an optimization problem by relying on the concept of robust invariance. The effectiveness of the approach is demonstrated in numerical examples that illustrate how to solve safe reference set and input-constraint set computation problems.
	\end{abstract}

	\begin{IEEEkeywords}
		Disturbance sets, Constrained linear systems, Invariant sets.
	\end{IEEEkeywords}

	\section{Introduction}
	The theory of set invariance plays a key role in the analysis of uncertain dynamical systems, as it provides the tools for the synthesis of robust controllers that can satisfy constraints in the presence of disturbances \cite{Blanchini2015}. Of particular interest are Robust Positive Invariant (RPI) sets \cite{Blanchini1999}, the characterization and computation of which has been a very active area of research \cite{Bertsekas1971,Bertsekas1972,Kolmanovsky1998}. RPI sets are used to provide robust stability and constraint satisfaction guarantees of various robust Model Predictive Control (MPC) and Reference Governor (RG) schemes \cite{Rawlings2009,Kouvaritakis2015,Garone2017}. 
	These guarantees are usually established using the maximal robust positive invariant (MRPI) set \cite{Kolmanovsky1998}, which is the largest RPI set included in the constraint set. 
	The minimal RPI (mRPI) \cite{Blanchini2015} set, which is the smallest RPI set for a given disturbance set \cite{Kolmanovsky1998},  is used to design trajectory tubes in robust MPC \cite{Mayne2005}, and to analyze the existence of MRPI sets. It was shown that an output-admissible RPI set exists for the system if and only if the mRPI set is included in the constraint set \cite{RakovicThesis}. In order to enforce this inclusion, several methods were proposed in the literature to design feedback controllers that sufficiently attenuate the effects of disturbances \cite{Rakovic2005RCI,Riverso2013}. On the other hand, in applications such as fault-tolerant control \cite{Olaru2008}, RPI sets that include a given set are computed and used for sensor fault isolation. All the aforementioned applications were developed under the assumption that \emph{the disturbance set is known a priori}.

	 In many practical cases, however, while the set of admissible states can be estimated from sensor measurements or pre-specified from given constraints to be satisfied, \emph{the disturbance set is unknown}, leaving the designer the task of suitably defining it, especially in case one must satisfy a given set of constraints on the system, e.g., encoding known physical limitations, or undesired states. Depending on the considered setting, one might be interested in designing a disturbance set which is either as large as possible or as small as possible, while guaranteeing that the prescribed constraints are satisfied in all circumstances.
%
	 	For example, in a decentralized MPC application such as that presented in \cite{Riverso2013,Mulagaleti2021}, the dynamic coupling between subsystems is modeled as an additive disturbance. Then, since the disturbance is a combination of the states of the neighbors, it is desirable to obtain a \textit{large} disturbance set, since this implies that the coverage of the available constraint space is maximized. Another example in which a \textit{large} disturbance set is desired in presented in \cite{Flores2008}, where the disturbance set represents the set of feasible tracking references. On the other hand, if the disturbance set represents the inputs that can be applied to the system and one wants to design the actuators such that the system will be able to reach a pre-specified set of outputs, computing the \emph{smallest} disturbance set is of interest. 
	 	Moreover, in disturbance identification techniques such as those presented in \cite{Odelson2006,Mulagaleti2020a}, one is interested in obtaining a \textit{small} disturbance set that can explain the data. 
%
%
	
	In this paper, we propose a method to compute a set of input disturbances acting on a dynamical system such that the resulting output set approximately matches an assigned one. Possible applications include, but are not limited to: supervisory control and decentralized MPC design, to enforce that the output of the system stays within a given set; actuator design, to size the range of the actuators so that the output covers a given range; the characterization of the robustness of a system with respect to external disturbances. This method is centered on the formulation of an optimization problem, with the input disturbance set being the unknown and the approximation error between the obtained and assigned output sets being the objective function to minimize.
	
	We propose the formulation of the optimization problem for linear systems and polytopic sets: since the construction of the output set requires the computation of an RPI set, we extend the results of~\cite{Rakovic2013,Trodden2016} to encode the computation of a minimal parametrized polytopic RPI set within the optimization problem. Then, we propose to use the penalty-function method presented in \cite{Anandalingam1990} to solve the resulting bilevel linear program. Finally, we show the effectiveness of the approach through numerical examples related to safe reference set and input-constraint set computation problems.
	The paper is organized as follows. We introduce some notation and recall basic definitions regarding set operations in Section~\ref{sec:preliminary}. Then, we introduce the problem we solve, along with relevant results to obtain a bilevel programing formulation in Section~\ref{sec:prob_def}. In Section~\ref{sec:RPI_constraint} we present the main results that permit the implementation of the RPI constraint. In Section~\ref{sec:inclusion_con}, we discuss the encoding of the inclusion constraints, following which in Section~\ref{sec:SQP-GS}, we discuss the implementation of the penalty function method to solve the bilevel LP. Finally, in Section~\ref{sec:numerical_example} we present some numerical results along with some application demonstrations. 
	\section{Notation and Set Operations}
	\label{sec:preliminary}
	Consider the sets $\mathcal{X}, \mathcal{Y} \subset \R^{n}$, and vectors $a \in \R^{n_a}$ and $b \in \R^{n_b}$.
	Given a matrix $L\in \R^{n \times m}$, we denote by $L\mathcal{X}$ the image $\{y\in\R^{m}: y=Lx, x \in \mathcal{X}\}$ of $\mathcal{X}$ under the linear transformation induced by $L$.
	We denote the $i$-th row of matrix $L$ by $L_i$, the rank of $L$ by $\mathrm{rank}(L)$, the image-space of $L$ by $\mathrm{Im}(L)$, and the null-space of $L$ by $\mathrm{null}(L)$.
	Given a square matrix $L \in \R^{n \times n}$, $\rho(L)$ denotes its spectral radius. 
	The set $\mathcal{B}_p^n:=\{x:\norm{x}_{p}\leq 1\}$ is the unit $p$-norm ball in $\R^n$. 
	A polyhedron is the intersection of a finite number of half-spaces, and a polytope is a compact polyhedron. 
	Given two matrices $L$, $M\in \R^{n \times m}$, $L \leq M$ denotes element-wise inequality.
	The symbols $\textbf{1}$, $\textbf{0}$, and $\I$ denote all-ones, all-zeros and identity matrix respectively, with dimensions specified if the context is ambiguous. The set of natural numbers between two integers $m$ and $n$, $m\leq n$, is denoted by $\mathbb{I}_m^n:=\{m,\ldots,n\}$.
	The Minkowski set addition is defined as $\mathcal{X} \oplus \mathcal{Y}:=\{x+y:x\in\mathcal{X},y\in\mathcal{Y}\}$.
	 The Cartesian product is defined as $\mathcal{X} \times \mathcal{Y}:=\{[x^{\top} \ y^{\top}]^{\top}:x\in\mathcal{X},y\in\mathcal{Y}\}$.
	 The support function of a compact set $\mathcal{X}\subseteq \R^{n}$ for a given $y\in\R^{n}$ is defined as $h_{\mathcal{X}}(y) := \underset{x \in \mathcal{X}}{\textrm{max}} \ y^{\top}x$. Let $\mathcal{X}$ and $\mathcal{Y}$ be polytopes in $\R^n$. Then, support functions are positively homogeneous, i.e., $h_{\alpha \mathcal{X}} (y) = \alpha h_{\mathcal{X}} (y)$  for any scalar $\alpha \geq 0$.
	%
	%
	%
	Moreover, for any vector $y \in \R^n$, we have $h_{\mathcal{X}\oplus\mathcal{Y}}(y) = h_{\mathcal{X}}(y) + h_{\mathcal{Y}}(y)$. 
	%
	The inclusion $\mathcal{X} \subseteq \mathcal{Y}$ holds if and only if $h_{\mathcal{X}}(y) \leq h_{\mathcal{Y}}(y)$ for all $y \in \mathcal{B}^n_p$.
	%
	Suppose $\mathcal{Y}:=\{x:Mx \leq b\}$, then the inclusion $\mathcal{X} \subseteq \mathcal{Y}$ holds if and only if $h_{\mathcal{X}}(M_i^{\top}) \leq h_{\mathcal{Y}}(M_i^{\top})\leq b_i$ for all  $i \in \mathbb{I}_1^{n_b},$ with $h_{\mathcal{Y}}(M_i^{\top})=b_i$ if $M$,$b$ define a minimal hyperplane
	representation of $\mathcal{Y}$. We use the Hausdorff distance between polytopes $\mathcal{X}$ and $\mathcal{Y}$ defined as $d_{\mathrm{H}}(\mathcal{X},\mathcal{Y}):=\max_{y \in \mathcal{B}^{n}_p}|h_{\mathcal{X}}(y)-h_{\mathcal{Y}}(y)|$.

	\section{Problem Definition and Approximations}
	\label{sec:prob_def}
	Consider the linear time-invariant discrete-time system
	\begin{subequations}
		\label{eq:system}
		\begin{align} 
		x(t+1)&=\A x(t)+\B w(t), \label{eq:system:state}\\
		y(t)&=\C x(t)+\D w(t), \label{eq:system:output}
		\end{align}
	\end{subequations}
	with state $x\in\R^{n_x}$, output $y\in \R^{n_y}$ and  disturbance $w\in\R^{n_w}$. 
	Given a polytopic set $\Y:=\{y:Gy \leq g\}$ of outputs with $g \in \R^{m_Y}$,
	 our goal is to compute a disturbance set $\W$ such that $\Y$ is ``reachable'' by the output $y$, in a sense which we will define precisely later. We refer to $w$ as a ``disturbance'', as it is customary in the literature of uncertain systems. Depending on the application, however, it could also represent a set of command inputs, as we will show through application examples. 
	 We work with the following standing assumption.
	
	\begin{assumption}
		\label{ass:stable}
		 Matrix A is strictly stable, i.e., $\rho(\A) < 1$. $\hfill\square$
	\end{assumption} 
   In this paper, we focus on the computation of a disturbance set 
    	$\mathcal{W}$ parametrized as the polytope $\WS:=\{w:Fw \leq \epsilon^w\}$ with $\epsilon^w \in \R^{m_W}$. We further assume that the row vectors $F_i^{\top} \in \R^{n_w}$ of matrix $F$ are given a priori, and restrict our attention to computing vector $\epsilon^w$. For simplicity, we also enforce that $\0 \in \mathcal{W}$, which is equivalent to $\epsilon^w \geq \0$. In the next section, we present a method to relax this restriction, i.e., permit the computation of a disturbance set $\mathcal{W}$ that does not contain the origin.  
    
%
	Given a disturbance set $\WS$, the \textit{forward computation} problem, which is typically tackled in the literature \cite{Kolmanovsky1998,Rakovic2005}, entails computing a suitable Robust Positive Invariant (RPI) set $\mathcal{X}:=\{x:\A x +\B w \in \mathcal{X}, \forall \ w \in \WS\}$. Of particular interest is the computation of tight RPI approximations of the \textit{minimal} RPI (mRPI) set $\mathcal{X}_{\mathrm{m}}(\epsilon^w)$, which is contained in every closed RPI set. It is given by the infinite Minkowski sum
	\begin{equation}
	\label{eq:mRPI}
	\X_{\mathrm{m}}(\epsilon^w)= \bigoplus\limits_{t=0}^{\infty} \A^t\B\WS.
	\end{equation}
	If $\epsilon^w \geq \0$, i.e., $\WS$ contains the origin, then $\X_{\mathrm{m}}(\epsilon^w)$ exists, is compact, convex and unique, and contains the origin \cite{Kolmanovsky1998}. Moreover, it is the limit of all state trajectories of \eqref{eq:system:state} under persistent disturbances $w \in \WS$ \cite{Blanchini2015}. Then, the corresponding limit set of output trajectories is $\mathcal{Y}_{\mathrm{m}}(\epsilon^w):=\C \mathcal{X}_{\mathrm{m}}(\epsilon^w)\oplus \D \WS$ as per \eqref{eq:system:output}. The set $\mathcal{Y}_{\mathrm{m}}(\epsilon^w)$ exists, is compact and convex with $\0 \in \mathcal{Y}_{\mathrm{m}}(\epsilon^w)$ if $\epsilon^w \geq \0$. 
	
	In this paper, we tackle the \textit{reverse computation} problem, i.e., given an output polytope $\mathcal{Y}$, compute the vector $\epsilon^w$ such that $\mathcal{Y}_{\mathrm{m}}(\epsilon^w)=\mathcal{Y}$.
	This problem, however, might not have a solution, i.e., there might not exist any $\epsilon^w$ satisfying the output-set equality because of either of the following two reasons. ($1$) The mRPI set $\mathcal{X}_{\mathrm{m}}(\epsilon^w)$ is not finitely determined, except in a few special cases, e.g., nilpotent systems \cite{Kolmanovsky1998}. Then, depending on the set $\mathcal{X}_{\mathrm{m}}(\epsilon^w)$ and the structure of matrix $\C$, the set $\mathcal{Y}_{\mathrm{m}}(\epsilon^w)$ might also not be finitely determined. In this case, enforcing $\mathcal{Y}_{\mathrm{m}}(\epsilon^w)=\mathcal{Y}$, with $\mathcal{Y}$ defined using a finite number of hyperplanes is not possible. ($2$) Even if the set $\mathcal{X}_{\mathrm{m}}(\epsilon^w)$ and matrix $\C$ are such that $\mathcal{Y}_{\mathrm{m}}(\epsilon^w)$ is finitely determined, its shape is in general not arbitrary, but is a function of the dynamics of system \eqref{eq:system:state} and parametrization of set $\WS$. This implies that enforcing $\mathcal{Y}_{\mathrm{m}}(\epsilon^w)=\mathcal{Y}$, with $\mathcal{Y}$ being a user-specified arbitrarily shaped polytope, might not be possible. Hence, we instead tackle the problem
	\begin{equation}
	\label{eq:orig_problem_to_solve}
	\min_{\epsilon^w \geq \0} \hspace{5pt} d_{\mathrm{H}}(\mathcal{Y}_{\mathrm{m}}(\epsilon^w),\mathcal{Y}).
	\end{equation}
This formulation includes the case $\mathcal{Y}_{\mathrm{m}}(\epsilon^w)=\mathcal{Y}$, which holds if and only if $d_{\mathrm{H}}(\mathcal{Y}_{\mathrm{m}}(\epsilon^w),\mathcal{Y})=0$.
	In the rest of this paper, we present a formulation to approximately solve Problem \eqref{eq:orig_problem_to_solve}, where the approximation results from $\mathcal{X}_{\mathrm{m}}(\epsilon^w)$ not being finitely determined.
	
		\begin{remark}
We present a brief discussion regarding uniqueness of the set $\WS$ if the equality $\mathcal{Y}_{\mathrm{m}}(\epsilon^w)=\mathcal{Y}$ holds. In case $n_y < n_x+n_w$, then there exist infinitely many solutions for the equation $[C \ D][x^{\top} w^{\top}]^{\top}=y$ for each $y \in \Y$, such that the set $\WS$ is nonunique. If instead $n_y\geq n_x+n_w$ and $\mathrm{rank}([C \ D])=n_x+n_w$, then there exists a unique pair $(x,w)$ corresponding to each $y$. By construction, this implies that $\WS$ is unique. Moreover, in case $\D=\0$, $n_y \geq n_x$, and $\mathrm{rank}(\C)=n_x$, the set $\mathcal{X}_{\mathrm{m}}(\epsilon^w)=C^{\dagger} \mathcal{Y}$ is uniquely defined, where $\C^{\dagger}$ is the left-inverse of $\C$. Then, if matrix $\B$ has a left-inverse, it can be shown that $\WS$ is unique. A unified theory that includes all these cases to establish the conditions for the uniqueness of $\WS$ is a subject of future research.  $\hfill\square$
	\end{remark}
%
	\subsection{Polytopic RPI set}
	In order to approximate the mRPI set, we consider a parametrized state-set $\FS:=\{x:Ex\leq \epsilon^x\}$ with $\epsilon^x \in \R^{m_X}$ and matrix $E$ given a priori. Then, we enforce that $\FS$ is RPI for system \eqref{eq:system:state} with disturbance set $\WS$, i.e., it satisfies the inclusion $\A \FS \oplus \B \WS \subseteq \FS$. In order for such an $\epsilon^x$ to exist, however, the matrix $E$ must satisfy some requirements, that we formulate next. 
	
	Firstly, we define the support functions
	\begin{align*}
	\csf_i\left (\epsilon^x\right )&:=h_{\A\FS[\epsilon^x]}\left (E_i^{\top}\right ), & \dsf_i\left (\epsilon^w\right ):=h_{\B\WS}\left (E_i^{\top}\right ), \\
	\bsf_i\left (\epsilon^x\right )&:=h_{\FS[\epsilon^x]}\left (E_i^{\top}\right ),
	\end{align*}
	for each $i \in \mathbb{I}_1^{m_X}$,
	such that the RPI condition is equivalent to $\csf(\epsilon^x)+\dsf(\epsilon^w)\leq \bsf(\epsilon^x)$. Without loss of generality, we assume that matrix $E$ is chosen such that $\bsf(\1)=\1$.
	Then, we make the following assumption regarding the existence of an RPI set.
	\begin{assumption}
		\label{ass:RPI_E_param}
		Matrix $E$ is chosen such that there exists an $\hat{\epsilon}^x \geq \0$ satisfying the inequality $\csf(\hat{\epsilon}^x)+\1 \leq \bsf(\hat{\epsilon}^x)$. $\hfill\square$
	\end{assumption}
	Assumption \ref{ass:RPI_E_param} implies that there exists an RPI set $\FS[\hat{\epsilon}^x]$ for the system $x(t+1)=\A x(t)+ \tilde{w}(t)$ with disturbances $\tilde{w} \in \FS[\1]$. 
    In the following result, we show that there always exists an RPI set $\FS$ for system \eqref{eq:system:state} with the disturbance set $\WS$.
    \begin{proposition}
    	\label{prop:RPI_existence_proof}
    	Suppose Assumption \ref{ass:RPI_E_param} holds, then there always exists a vector $\epsilon^x \geq \0$ for every $\epsilon^w \geq \0$ such that the RPI condition $\A \FS \oplus \B \WS \subseteq \FS$ holds.$\hfill\square$
    \end{proposition}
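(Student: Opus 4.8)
The plan is to exploit the positive homogeneity of the set parametrization together with Assumption~\ref{ass:RPI_E_param}: starting from a feasible $\hat{\epsilon}^x$ guaranteed by that assumption, I would scale it up by a large enough factor $\lambda>0$ so that the (finite) contribution $\dsf(\epsilon^w)$ of the disturbance set is dominated by $\lambda$ times the margin $\bsf(\hat{\epsilon}^x)-\csf(\hat{\epsilon}^x)\geq\1$.

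First I would record the scaling identity $\FS[\lambda\epsilon^x]=\lambda\,\FS[\epsilon^x]$ for any $\lambda>0$, which follows from $Ex\leq\lambda\epsilon^x \iff E(x/\lambda)\leq\epsilon^x$. Combined with positive homogeneity of support functions (recalled in Section~\ref{sec:preliminary}) and linearity of $x\mapsto\A x$, this yields $\csf(\lambda\epsilon^x)=\lambda\,\csf(\epsilon^x)$ and $\bsf(\lambda\epsilon^x)=\lambda\,\bsf(\epsilon^x)$ for all $\lambda>0$. Next, fix any $\epsilon^w\geq\0$; since $\WS$ is a polytope, $\B\WS$ is compact, so each $\dsf_i(\epsilon^w)=h_{\B\WS}(E_i^{\top})$ is finite (and nonnegative, as $\0\in\WS$). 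Set $\lambda:=1+\max_{i\in\mathbb{I}_1^{m_X}}\dsf_i(\epsilon^w)>0$ and define $\epsilon^x:=\lambda\hat{\epsilon}^x$, which satisfies $\epsilon^x\geq\0$ because $\hat{\epsilon}^x\geq\0$.

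It then remains to verify the RPI condition $\csf(\epsilon^x)+\dsf(\epsilon^w)\leq\bsf(\epsilon^x)$. Using the scaling identities, the bound $\dsf(\epsilon^w)\leq\big(\max_i\dsf_i(\epsilon^w)\big)\1\leq\lambda\1$, and Assumption~\ref{ass:RPI_E_param} in the form $\bsf(\hat{\epsilon}^x)-\csf(\hat{\epsilon}^x)\geq\1$, one obtains componentwise
\begin{equation*}
\csf(\epsilon^x)+\dsf(\epsilon^w)=\lambda\csf(\hat{\epsilon}^x)+\dsf(\epsilon^w)\leq\lambda\csf(\hat{\epsilon}^x)+\lambda\1\leq\lambda\bsf(\hat{\epsilon}^x)=\bsf(\epsilon^x),
\end{equation*}
which establishes the claim.

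I do not expect a substantive obstacle. The only points requiring a little care are: (i) the scaling $\FS[\lambda\epsilon^x]=\lambda\FS[\epsilon^x]$ and the homogeneity of $\csf,\bsf$ must be applied even when $\FS[\epsilon^x]$ is unbounded — but the argument above is purely algebraic and does not use boundedness of $\FS$, and degenerate rows with $\bsf_i(\hat{\epsilon}^x)=+\infty$ satisfy the inequality trivially; and (ii) $\dsf(\epsilon^w)$ must be finite so that $\lambda$ is well defined, which holds because $\WS$ is a polytope. Note that Assumption~\ref{ass:stable} is not actually invoked in this particular statement — strict stability of $\A$ enters later, when passing from the existence of some RPI set to the approximation of the mRPI set.
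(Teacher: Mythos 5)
Your proof is correct and follows essentially the same route as the paper: both take the $\hat{\epsilon}^x$ guaranteed by Assumption~\ref{ass:RPI_E_param}, scale it by a factor at least $\norm{\dsf(\epsilon^w)}_{\infty}$, and check the RPI inequality for the scaled vector using $\dsf(\epsilon^w)\leq\norm{\dsf(\epsilon^w)}_{\infty}\1$. The only difference is in how the scaled candidate is verified — the paper passes through the Farkas/duality multipliers $\hat{\Lambda}_{\mathbf{c}},\hat{\Lambda}_{\mathbf{b}}$, whereas you invoke positive homogeneity of $\csf$ and $\bsf$ directly; these encode the same linearity of the condition in $\epsilon^x$, so the arguments are interchangeable.
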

	\begin{proof}
		If Assumption \ref{ass:RPI_E_param} holds, then by duality of linear programs and Farkas' lemma \cite{Blanchini2015}, there exist nonnegative multiplier matrices $\hat{\Lambda}_{\mathbf{c}},\hat{\Lambda}_{\mathbf{b}} \in \R^{m_X \times m_X}$ satisfying the relationships 
		\begin{align}
		\label{eq:basic_satisfied_1}
		\hat{\Lambda}_{\mathbf{c}}^{\top} \hat{\epsilon}_x + \1 \leq \hat{\Lambda}_{\mathbf{b}}^{\top} \hat{\epsilon}_x, && \hat{\Lambda}_{\mathbf{c}}^{\top} E =E \A, && \hat{\Lambda}_{\mathbf{b}}^{\top} E =E.
		\end{align}
		For a given $\epsilon^w \geq \0$, there exists an $\epsilon^x \geq \0$ satisfying the RPI condition $\csf(\epsilon^x)+\dsf(\epsilon^w)\leq \bsf(\epsilon^x)$ if and only if there exist nonnegative multiplier matrices $\Lambda_{\mathbf{c}},\Lambda_{\mathbf{b}} \in \R^{m_X \times m_X}$ satisfying 
		\begin{align}
		\label{eq:basic_satisfied_2}
		\Lambda_{\mathbf{c}}^{\top} \epsilon_x + \dsf(\epsilon^w) \leq \Lambda_{\mathbf{b}}^{\top} \epsilon_x, && \Lambda_{\mathbf{c}}^{\top} E =E \A, && \Lambda_{\mathbf{b}}^{\top} E =E.
		\end{align}
		Then, we see that setting $\epsilon^x=\norm{\dsf(\epsilon^w)}_{\infty} \hat{\epsilon}^x$ satisfies the relationships in \eqref{eq:basic_satisfied_2} with $\Lambda_{\mathbf{c}}=\hat{\Lambda}_{\mathbf{c}}$ and $\Lambda_{\mathbf{b}}=\hat{\Lambda}_{\mathbf{b}}$.
	\end{proof}
\remark{In order to verify if Assumption \ref{ass:RPI_E_param} holds, one can solve the LP (8) in \cite{Trodden2016}: The LP is bounded if and only if Assumption \ref{ass:RPI_E_param} holds. An iterative procedure to obtain a matrix $E$ that satisfies this requirement was presented in \cite{Lorenzetti2019}. $\hfill\square$} 
\remark{The choice of polytopic parametrizations with fixed hyperplanes for the disturbance set and the corresponding RPI set is motivated primarily by their computational convenience. In particular, the choice of matrix $F$ is completely independent of system \eqref{eq:system}, while matrix $E$ must satisfy Assumption \ref{ass:RPI_E_param}, which depends on system \eqref{eq:system}. Conservatism introduced by this parametrization can potentially be reduced by also optimizing over the hyperplanes through the introduction of optimization variables $E$ and $F$. We note that the results presented in the rest of this paper continue to hold in the presence of these additional variables.  Further, alternative convex parametrizations such as ellipsoidal and zonotopic sets \cite{Blanchini2015,Althoff2016} can also be considered. Embedding the computation of small RPI sets with such parametrizations within an optimization problem is a subject of future research.
$\hfill\square$}

\subsection{Approximating Problem \eqref{eq:orig_problem_to_solve}}
Having established the existence of a polytopic RPI set $\FS$ under Assumption \ref{ass:RPI_E_param}, we will now proceed with approximating Problem \eqref{eq:orig_problem_to_solve} using this set. To that end, we first note that the inclusion $\mathcal{X}_{\mathrm{m}}(\epsilon^w) \subseteq \FS$ holds by the definition of the mRPI set. Then, the set $\mathbb{Y}(\epsilon^x,\epsilon^w):=\C \FS \oplus \D\WS$ satisfies $\mathcal{Y}_{\mathrm{m}}(\epsilon^w)\subseteq \mathbb{Y}(\epsilon^x,\epsilon^w)$ by the same definition. Based on this set, we approximate Problem \eqref{eq:orig_problem_to_solve} as the bilevel programing problem
\begin{subequations}
	\label{eq:bilevel_problem_to_solve}
\begin{align}
\min_{\epsilon^w \geq \0} & \hspace{5pt} d_{\mathrm{H}}(\mathbb{Y}(\epsilon^x,\epsilon^w),\mathcal{Y}) \\
& \begin{array}{@{}r@{}l}
\displaystyle \hspace{-12pt}\text{ s.t. } \epsilon^x = \arg\min_{\munderbar{\epsilon}^x} 
&  \label{eq:lower_level_problem_RPI} \ \ d_{\mathrm{H}}(\FS[\munderbar{\epsilon}^x],\X_{\mathrm{m}}(\epsilon^w)), \\  
\displaystyle \mathrm{ s.t. }  & \ \ \csf(\munderbar{\epsilon}^x)+\dsf(\epsilon^w)\leq \bsf(\munderbar{\epsilon}^x),
\end{array}
\end{align}
\end{subequations}
in which the disturbance set is computed by the upper-level problem, for which a corresponding RPI set $\FS$ is computed by the lower-level problem in \eqref{eq:lower_level_problem_RPI}. The lower-level problem is formulated in such a way that $\FS$ is the tightest RPI approximation of the mRPI set, as seen in the objective function $d_{\mathrm{H}}(\FS[\epsilon^x],\X_{\mathrm{m}}(\epsilon^w))$. The constraint-set of this problem is nonempty according to Proposition \ref{prop:RPI_existence_proof}, and all feasible $\munderbar{\epsilon}^x\geq\0$ since $\0 \in \WS$ implies $\0 \in \mathcal{X}_{\mathrm{m}}(\epsilon^w)\subseteq \FS[\munderbar{\epsilon}^x]$.

The rationale for formulating this problem follows from the triangle inequality: For a given $\epsilon^w\geq \0$ and $\epsilon^x \geq \0$, the inequality
\begin{align*}
d_{\mathrm{H}}(\mathcal{Y}_{\mathrm{m}}(\epsilon^w),\mathcal{Y})\leq d_{\mathrm{H}}(\mathcal{Y}_{\mathrm{m}}(\epsilon^w),\mathbb{Y}(\epsilon^x,\epsilon^w))+d_{\mathrm{H}}(\mathbb{Y}(\epsilon^x,\epsilon^w),\Y)
\end{align*}
 holds. The second part of the inequality is minimized by the upper-level problem in \eqref{eq:bilevel_problem_to_solve}. With respect to the first part, let us define $d^\mathrm{x}_{\mathrm{H}}:=d_{\mathrm{H}}(\FS,\mathcal{X}_{\mathrm{m}}(\epsilon^w))$. Then, by definition of Hausdorff distance, we have $\FS \subseteq \mathcal{X}_{\mathrm{m}}(\epsilon^w) \oplus d^\mathrm{x}_{\mathrm{H}} \mathcal{B}^{n_x}_{2}$. From basic properties of Minkowski algebra, it follows that 
 \begin{align*}
 \C\FS\oplus\D\WS \subseteq \C\mathcal{X}_{\mathrm{m}}(\epsilon^w) \oplus\D\WS \oplus d^\mathrm{x}_{\mathrm{H}} \C\mathcal{B}^{n_x}_{\infty},
 \end{align*}
  which by definition of Hausdorff distance implies $d_{\mathrm{H}}(\mathcal{Y}_{\mathrm{m}}(\epsilon^w),\mathbb{Y}(\epsilon^x,\epsilon^w)) \leq d^\mathrm{x}_{\mathrm{H}} \norm{h_{\C\mathcal{B}^{n_x}_{\infty}}(p)}_{\infty}$ for all $p \in \mathcal{B}^{n_y}_{2}$. Since $d^\mathrm{x}_{\mathrm{H}}$ is minimized by the lower-level problem \eqref{eq:lower_level_problem_RPI},
Problem \eqref{eq:bilevel_problem_to_solve} minimizes an upper bound to Problem \eqref{eq:orig_problem_to_solve} as
\begin{align}
\label{eq:Hausdorf_ineq}
d_{\mathrm{H}}(\mathcal{Y}_{\mathrm{m}}(\epsilon^w),\mathcal{Y})\leq d^\mathrm{x}_{\mathrm{H}} \norm{h_{\C\mathcal{B}^{n_x}_{\infty}}(p)}_{\infty}+d_{\mathrm{H}}(\mathbb{Y}(\epsilon^x,\epsilon^w),\Y).
\end{align}

Finally, in order to eliminate the mRPI set from Problem \eqref{eq:lower_level_problem_RPI} we use the following results from \cite{Rakovic2013}, which state that the solution of Problem \eqref{eq:lower_level_problem_RPI} can be obtained using fixed-point iterations. In recalling these results, we denote $\dsf(\epsilon^w)$ by $\dsf$ for ease of notation.
\begin{lemma} \cite[Theorems 1 and 2, Corollary 1]{Rakovic2013}
	\label{lem:brouwer}
	Suppose Assumption \ref{ass:RPI_E_param} holds and $\epsilon^w \geq \0$. Define the sets
	\begin{align*}
	\mathcal{H}(\dsf)&:=\{\epsilon^x: \0 \leq \epsilon^x \leq \norm{\dsf}_{\infty}\hat{\epsilon}^x\},\\ \mathcal{E}(\dsf)&:=\{\epsilon^x\geq \0:\csf(\epsilon^x)+\dsf\leq \bsf(\epsilon^x)\},
	\end{align*} 
	where $\mathcal{E}(\dsf)$ is the constraint-set of the lower-level problem \eqref{eq:lower_level_problem_RPI}.
	\begin{enumerate}
		\item The sequence generated by the iterative procedure $\epsilon^x_{[k+1]}:=\csf(\epsilon^x_{[k]})+\dsf$ from any initial-condition $\epsilon^x_{[0]} \in \mathcal{H}(\dsf)$
		converges to a fixed-point $\epsilon_*^x(\epsilon^x_{[0]},\dsf) \in \mathcal{H}(\dsf)$, i.e., $\epsilon_*^x(\epsilon^x_{[0]},\dsf):=\lim\limits_{k \to \infty} \epsilon^x_{[k]}$. This fixed-point satisfies the equalities
		\begin{align*}
		\scalemath{0.95}{\csf(\epsilon_*^x(\epsilon^x_{[0]},\dsf))+\dsf=\bsf(\epsilon_*^x(\epsilon^x_{[0]},\dsf)), \ \bsf(\epsilon_*^x(\epsilon^x_{[0]},\dsf))=\epsilon_*^x(\epsilon^x_{[0]},\dsf).}
		\end{align*}
		\item 
		The fixed-point reached from the initial-condition $\epsilon^x_{[0]}=\0$ satisfies the inequality $\norm{\epsilon_*^x(\0,\dsf)}_1 \leq \norm{\munderbar{\epsilon}^x}_1$ for all $\munderbar{\epsilon}^x \in \mathcal{E}(\dsf)$, and $\FS[\epsilon_*^x(\0,\dsf)]$ is the minimal parametrized RPI set, i.e., 
		\begin{align*}
		\X_{\mathrm{m}}(\epsilon^w) \subseteq \FS[\epsilon_*^x(\0,\dsf)] = \bigcap_{\munderbar{\epsilon}^x \in \mathcal{E}(\dsf)} \FS[\munderbar{\epsilon}^x].
		\end{align*} $\hfill\square$
	\end{enumerate} 
\end{lemma}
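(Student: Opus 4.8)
The plan is to treat the update $\Phi(\epsilon^x):=\csf(\epsilon^x)+\dsf$ as a continuous, order‑preserving self‑map of the box $\mathcal{H}(\dsf)$: existence of a fixed point then comes from Brouwer's theorem, convergence of the iteration from a monotone‑convergence argument, and the minimality claims of statement~2 from an order‑comparison argument. Throughout I would use that $\mathcal{H}(\dsf)$ is a compact, convex order interval and that $\csf$, $\bsf$ are well‑behaved parametric linear programs.

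First I would record the structural properties of $\csf$ and $\bsf$ on $\{\epsilon^x\ge\0\}$. Each $\csf_i(\epsilon^x)=h_{\A\FS[\epsilon^x]}(E_i^\top)=\max\{E_i\A x:Ex\le\epsilon^x\}$ is the optimal value of a linear program in its right‑hand side, hence finite (since $\FS[\1]$ is a polytope), continuous, concave, positively homogeneous of degree one, and monotone nondecreasing in $\epsilon^x$, with $\csf(\0)=\0$; the same holds for $\bsf_i(\epsilon^x)=h_{\FS[\epsilon^x]}(E_i^\top)=\max\{E_i x:Ex\le\epsilon^x\}$, which additionally satisfies $\bsf_i(\epsilon^x)\le\epsilon^x_i$. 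Thus $\Phi$ is continuous and monotone, and I would verify it maps $\mathcal{H}(\dsf)$ into itself: $\Phi(\epsilon^x)\ge\0$ because $\0\in\FS[\epsilon^x]$ and $\0\in\B\WS$ force $\csf(\epsilon^x)\ge\0$ and $\dsf\ge\0$; and for $\epsilon^x\le\norm{\dsf}_\infty\hat\epsilon^x$, monotonicity and homogeneity give $\csf(\epsilon^x)\le\norm{\dsf}_\infty\csf(\hat\epsilon^x)$, which by Assumption~\ref{ass:RPI_E_param} and $\bsf(\hat\epsilon^x)\le\hat\epsilon^x$ is at most $\norm{\dsf}_\infty(\hat\epsilon^x-\1)$, so that $\Phi(\epsilon^x)\le\norm{\dsf}_\infty\hat\epsilon^x$ after adding $\dsf\le\norm{\dsf}_\infty\1$. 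Since $\mathcal{H}(\dsf)$ is compact and convex, Brouwer's theorem already furnishes a fixed point.

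For statement~1 I would first take $\epsilon^x_{[0]}=\0$: then $\epsilon^x_{[1]}=\dsf\ge\0=\epsilon^x_{[0]}$, so monotonicity of $\Phi$ makes $\{\epsilon^x_{[k]}\}$ nondecreasing; being trapped in $\mathcal{H}(\dsf)$ it is bounded, hence converges, and by continuity its limit $\epsilon_*^x$ is a fixed point. For an arbitrary $\epsilon^x_{[0]}\in\mathcal{H}(\dsf)$ I would sandwich $\epsilon^x_{[k]}$ between the nondecreasing orbit from $\0$ and the nonincreasing orbit from the upper corner $\norm{\dsf}_\infty\hat\epsilon^x$ (nonincreasing because that corner is a supersolution, as shown above) and then argue that the orbit itself converges. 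This last step is the main obstacle: monotonicity alone does not force every orbit of a monotone self‑map to converge, so one must exploit the positive homogeneity and concavity of $\csf$ together with the \emph{strict} slack $\csf(\hat\epsilon^x)+\1\le\bsf(\hat\epsilon^x)$ of Assumption~\ref{ass:RPI_E_param} --- precisely where strict stability $\rho(\A)<1$ enters --- which is the content of Theorems~1--2 of \cite{Rakovic2013}. Once convergence is in hand the fixed‑point identities are short: from $\epsilon_*^x=\csf(\epsilon_*^x)+\dsf$, every $z\in\A\FS[\epsilon_*^x]\oplus\B\WS$ obeys $E_iz\le\csf_i(\epsilon_*^x)+\dsf_i=(\epsilon_*^x)_i$, so $\FS[\epsilon_*^x]$ is RPI; hence $(\epsilon_*^x)_i=h_{\A\FS[\epsilon_*^x]\oplus\B\WS}(E_i^\top)\le h_{\FS[\epsilon_*^x]}(E_i^\top)=\bsf_i(\epsilon_*^x)\le(\epsilon_*^x)_i$, giving $\bsf(\epsilon_*^x)=\epsilon_*^x$ and therefore $\csf(\epsilon_*^x)+\dsf=\bsf(\epsilon_*^x)$.

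For statement~2, note that any $\munderbar{\epsilon}^x\in\mathcal{E}(\dsf)$ satisfies $\Phi(\munderbar{\epsilon}^x)=\csf(\munderbar{\epsilon}^x)+\dsf\le\bsf(\munderbar{\epsilon}^x)\le\munderbar{\epsilon}^x$, so by induction and monotonicity $\Phi^k(\0)\le\munderbar{\epsilon}^x$ for all $k$, and letting $k\to\infty$ yields $\epsilon_*^x(\0,\dsf)\le\munderbar{\epsilon}^x$ componentwise; since both vectors are nonnegative this gives $\norm{\epsilon_*^x(\0,\dsf)}_1\le\norm{\munderbar{\epsilon}^x}_1$. The same componentwise inequality gives $\FS[\epsilon_*^x(\0,\dsf)]\subseteq\FS[\munderbar{\epsilon}^x]$ for every $\munderbar{\epsilon}^x\in\mathcal{E}(\dsf)$, hence $\FS[\epsilon_*^x(\0,\dsf)]\subseteq\bigcap_{\munderbar{\epsilon}^x\in\mathcal{E}(\dsf)}\FS[\munderbar{\epsilon}^x]$; the reverse inclusion is immediate because the identities from statement~1 show $\epsilon_*^x(\0,\dsf)\in\mathcal{E}(\dsf)$, i.e.\ $\FS[\epsilon_*^x(\0,\dsf)]$ is itself one of the intersected sets. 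Finally, $\FS[\epsilon_*^x(\0,\dsf)]$ is a closed RPI set, so it contains $\X_{\mathrm{m}}(\epsilon^w)$ by the defining property of the mRPI set recalled in Section~\ref{sec:prob_def}.
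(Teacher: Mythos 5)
The paper does not prove Lemma~\ref{lem:brouwer}: it is imported verbatim from \cite{Rakovic2013}, so there is no in-paper argument to compare against. Your reconstruction of the order-theoretic skeleton is the natural one and most of it is sound: the verification that $\Phi(\epsilon^x)=\csf(\epsilon^x)+\dsf$ maps $\mathcal{H}(\dsf)$ into itself via monotonicity, positive homogeneity, and the slack $\csf(\hat{\epsilon}^x)\le\bsf(\hat{\epsilon}^x)-\1\le\hat{\epsilon}^x-\1$ is correct; the monotone nondecreasing orbit from $\0$ does converge; the derivation of the two fixed-point identities from the RPI property of $\FS[\epsilon_*^x]$ is clean; and the whole of statement~2 (the comparison $\Phi^k(\0)\le\munderbar{\epsilon}^x$ for all $\munderbar{\epsilon}^x\in\mathcal{E}(\dsf)$, the intersection identity, and the mRPI inclusion) is complete. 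Invoking Brouwer is redundant once you have the monotone orbit from $\0$, but that is harmless.

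The genuine gap is the one you flag yourself: convergence of the iteration from an \emph{arbitrary} $\epsilon^x_{[0]}\in\mathcal{H}(\dsf)$, which is explicitly part of statement~1. The sandwich $\Phi^k(\0)\le\Phi^k(\epsilon^x_{[0]})\le\Phi^k(\norm{\dsf}_{\infty}\hat{\epsilon}^x)$ only traps the orbit between a nondecreasing and a nonincreasing sequence; unless their limits coincide---i.e., unless the fixed point is unique, which the paper only establishes \emph{later} (Lemma~\ref{lem:trodden} under $\dsf>\0$ and Theorem~\ref{thm:induction_theorem} in general, both of which presuppose Lemma~\ref{lem:brouwer})---the squeezed orbit need not converge at all. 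Note also that concavity plus positive homogeneity make $\csf$ superadditive, which goes the wrong way for a contraction estimate on differences of orbits, so the ``exploit concavity and the strict slack'' route you gesture at is not a one-liner. Since the lemma is itself a citation, deferring this step to \cite{Rakovic2013} is defensible, but as a self-contained proof statement~1 remains unestablished for initial conditions other than $\0$ (which, to be fair, is the only initial condition the rest of the paper actually uses from this lemma).
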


From Lemma \ref{lem:brouwer}.2, we see that $\epsilon_*^x(\0,\dsf(\epsilon^w))$ is the solution of the lower-level problem \eqref{eq:lower_level_problem_RPI}, since the RPI set $\FS[\epsilon_*^x(\0,\dsf(\epsilon^w))]$ satisfies
\begin{align*}
d_{\mathrm{H}}(\FS[\epsilon_*^x(\0,\dsf(\epsilon^w))],\mathcal{X}_{\mathrm{m}}(\epsilon^w)) \leq d_{\mathrm{H}}(\FS[\munderbar{\epsilon}^x],\mathcal{X}_{\mathrm{m}}(\epsilon^w))
\end{align*}
for all $\munderbar{\epsilon}^x \in \mathcal{E}(\dsf(\epsilon^w))$. 
 Since this solution also has the smallest $1$-norm value over all $\munderbar{\epsilon}^x \in \mathcal{E}(\dsf(\epsilon^w))$, Problem \eqref{eq:bilevel_problem_to_solve} is equivalent to
%
	\begin{align}
		\label{eq:bilevel_problem_to_solve_1_norm}
	\min_{\epsilon^w \geq \0} &\hspace{5pt} d_{\mathrm{H}}(\mathbb{Y}(\epsilon^x,\epsilon^w),\mathcal{Y}) \\
	& \begin{array}{@{}r@{}l}
	\displaystyle \hspace{-12pt}\text{ s.t. } \epsilon^x = \arg\min_{\munderbar{\epsilon}^x \in \mathcal{E}(\dsf(\epsilon^w))} 
	& \ \ \norm{\munderbar{\epsilon}^x}_1. \nonumber
	\end{array}
	\end{align}

While Problem \eqref{eq:bilevel_problem_to_solve_1_norm} minimizes the distance $d_{\mathrm{H}}(\mathbb{Y}(\epsilon^x,\epsilon^w),\mathcal{Y})$, there exist problem settings which have stronger requirements with respect to inclusions of the output-set $\mathcal{Y}$.

\textit{Example 1:
	Consider the case in which \eqref{eq:system:state} represents a linear system equipped with a stabilizing feedback controller, such that $w$ represents the feedforward reference signal. The system is subject to constraints $y \in \Y$. 
	Then, an \textit{inner-approximation} version of Problem \eqref{eq:bilevel_problem_to_solve_1_norm}  enforces $\mathcal{Y}_{\mathrm{m}}(\epsilon^w)\subseteq \Y$ to compute the set of references $\WS$ that satisfy the system constraints.
}

\textit{Example 2: Consider the case in which system~\eqref{eq:system} is used to model a dynamic disturbance
$y(t)$, 
e.g., pedestrian behavior \cite{Batkovic2018} or an asset price, possibly estimated from linear time-series analysis. Let $\Y$ represent experimental data from the disturbance-generating system. Then, an  \textit{outer-approximation} version of Problem \eqref{eq:bilevel_problem_to_solve_1_norm} enforces $\mathcal{Y}_{\mathrm{m}}(\epsilon^w)$ to compute a
disturbance set $\WS$, using which a \textit{realistic} simulator can be designed. }

In order to formulate the \textit{inner-approximation} version of Problem \eqref{eq:bilevel_problem_to_solve_1_norm}, we note that enforcing $\mathbb{Y}(\epsilon^x,\epsilon^w) \subseteq \Y$ guarantees $\mathcal{Y}_{\mathrm{m}}(\epsilon^w) \subseteq \Y$ since $\mathcal{Y}_{\mathrm{m}}(\epsilon^w) \subseteq \mathbb{Y}(\epsilon^x,\epsilon^w)$. Hence, we formulate
\begin{align}
	\label{eq:bilevel_problem_to_solve_1_norm_innerApp}
	\min_{\epsilon^w \geq \0} &\hspace{5pt} d_{\mathrm{H}}(\mathbb{Y}(\epsilon^x,\epsilon^w),\mathcal{Y}) \\
	& \hspace{-8pt}\text{ s.t. } \mathbb{Y}(\epsilon^x,\epsilon^w)\subseteq\mathcal{Y}, \nonumber \\
	& \begin{array}{@{}r@{}l}
	\displaystyle \hspace{12pt}\epsilon^x = \arg\min_{\munderbar{\epsilon}^x \in \mathcal{E}(\dsf(\epsilon^w))} 
	& \ \ \norm{\munderbar{\epsilon}^x}_1. \nonumber
	\end{array}
\end{align}

In order to formulate the \textit{outer-approximation} version of Problem \eqref{eq:bilevel_problem_to_solve_1_norm}, we note that $\oplus_{t=0}^N \A^t \B \WS \subseteq \mathcal{X}_{\mathrm{m}}(\epsilon^w)$ for all $N\geq \0$. Then, the inclusion $ \Y\subseteq \mathcal{Y}_{\mathrm{m}}(\epsilon^w)$ can be enforced by choosing an index $N\geq \0$, and appending the constraint $\Y \subseteq \oplus_{t=0}^N \C\A^t \B \WS \oplus \D \WS$. Hence, we formulate
\begin{align}
\label{eq:bilevel_problem_to_solve_1_norm_outerApp}
\min_{\epsilon^w \geq \0} &\hspace{5pt} d_{\mathrm{H}}(\mathbb{Y}(\epsilon^x,\epsilon^w),\mathcal{Y}) \\
& \hspace{-8pt}\text{ s.t. } \Y \subseteq \oplus_{t=0}^N \C\A^t \B \WS \oplus \D \WS, \nonumber \\
& \begin{array}{@{}r@{}l}
\displaystyle \hspace{12pt}\epsilon^x = \arg\min_{\munderbar{\epsilon}^x \in \mathcal{E}(\dsf(\epsilon^w))} 
& \ \ \norm{\munderbar{\epsilon}^x}_1. \nonumber
\end{array}
\end{align}

We now formulate the assumptions that the output-set $\Y$ must satisfy in order to guarantee feasibility of Problems \eqref{eq:bilevel_problem_to_solve_1_norm_innerApp},\eqref{eq:bilevel_problem_to_solve_1_norm_outerApp}
\begin{assumption}
	\label{ass:feasibility_assumptions}
	(\textit{Inner}): The origin belongs to the output-set, i.e., $\{\0\} \in \Y$; (\textit{Outer}): The output-set belongs to the output controllable subspace, i.e., 
	$\Y \subset \mathrm{Im}([\C\B \ \C\A\B \ \cdots \C\A^{n_x-1}\B \ \D])$. $\hfill\square$
\end{assumption}

Under Assumption \ref{ass:feasibility_assumptions}-\textit{Inner}, vector $g\geq\0$, and $(\epsilon^x,\epsilon^w)=\0$ are feasible solutions of Problem \eqref{eq:bilevel_problem_to_solve_1_norm_innerApp}: this condition is necessary and sufficient for the existence of a set $\WS$ solving the \textit{inner-approximation} problem.

Under Assumption \ref{ass:feasibility_assumptions}-\textit{Outer}, all $y \in \Y$ can be reached from the origin with feasible inputs $w$. Then, Problem \eqref{eq:bilevel_problem_to_solve_1_norm_outerApp} is feasible for all $N \geq n_x$: this condition is necessary and sufficient for the existence of a set $\WS$ solving the  \textit{outer-approximation} problem.
Moreover,
 if $\mathrm{rank}([\C\B \ \C\A\B \ \cdots \C\A^{n_x-1}\B \ \D])=n_y$, then Problem \eqref{eq:bilevel_problem_to_solve_1_norm_outerApp} is feasible for every nonempty $\Y$ and $N \geq n_x$ since system \eqref{eq:system} is then output-controllable.

In the rest of this paper, we transform Problems \eqref{eq:bilevel_problem_to_solve_1_norm_innerApp}-\eqref{eq:bilevel_problem_to_solve_1_norm_outerApp} into implementable forms. To that end, in the next section, we discuss the characterization of the RPI constraints.
\section{Characterization of RPI Constraints }
\label{sec:RPI_constraint}
In this section, we show that the lower-level problems in \eqref{eq:bilevel_problem_to_solve_1_norm_innerApp},\eqref{eq:bilevel_problem_to_solve_1_norm_outerApp} that characterize the minimal parametrized RPI set can be replaced by the equality $\csf(\epsilon^x)+\dsf(\epsilon^w)=\epsilon^x$, i.e., the equivalence
\begin{align}
\label{eq:main_result_RPI}
\epsilon^x = \arg\min_{\munderbar{\epsilon}^x \in \mathcal{E}(\dsf(\epsilon^w))} 
 \ \ \norm{\munderbar{\epsilon}^x}_1 && \Leftrightarrow && \csf(\epsilon^x)+\dsf(\epsilon^w)=\epsilon^x
\end{align}
holds.
For ease of notation, we denote $\dsf(\epsilon^w)$ by $\dsf$ in the rest of this section, since the results are presented for a fixed $\epsilon^w \geq \0$.

Firstly, we recall from Lemma \ref{lem:brouwer} that the fixed-point solution
\begin{align*}
\epsilon_*^x(\0,\dsf) = \arg\min_{\munderbar{\epsilon}^x \in \mathcal{E}(\dsf)} 
\ \ \norm{\munderbar{\epsilon}^x}_1
\end{align*}
exists, and satisfies the equality $\csf(\epsilon_*^x(\0,\dsf))+\dsf=\epsilon_*^x(\0,\dsf)$.
Moreover, every $\epsilon^x$ that satisfies $\csf(\epsilon^x)+\dsf=\epsilon^x$ is a fixed-point for the iteration $\epsilon^x_{[k+1]}=\csf(\epsilon^x_{[k]})+\dsf$ with $\epsilon^x_{[0]}=\epsilon^x$, i.e., $\epsilon^x=\epsilon^x_*(\epsilon^x,\dsf)$. Then, if there exists a unique fixed-point
\begin{align*}
\epsilon^x_{\#}(\dsf):=\epsilon_*^x(\epsilon_{[0]}^x,\dsf), && \forall \ \epsilon_{[0]}^x \in \mathcal{H}(\dsf),
\end{align*}
 the equivalence in \eqref{eq:main_result_RPI} holds with $\epsilon^x=\epsilon^x_{\#}(\dsf)$.
%
%
%
%
In the following result from \cite{Trodden2016}, the uniqueness of the fixed-point was shown under a slightly restrictive assumption.
\begin{lemma}\cite[Theorem~3]{Trodden2016}
	\label{lem:trodden}
	Suppose Assumption \ref{ass:RPI_E_param} holds and $\dsf > \0$, then the fixed-point $\epsilon_*^x(\epsilon^x_{[0]},\dsf)$ is unique. 
	That is, there exists an $\epsilon_{\#}^x(\dsf):=\epsilon_*^x(\epsilon^x_{[0]},\dsf)$ for all $\epsilon^x_{[0]} \in  \mathcal{H}(\dsf)$ satisfying the equality
	$\csf(\epsilon_{\#}^x(\dsf))+\dsf=\epsilon_{\#}^x(\dsf)$.
	  $\hfill\square$
\end{lemma}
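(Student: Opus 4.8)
The plan is to reduce the statement to a purely algebraic uniqueness claim about nonnegative solutions of the equation $\csf(\epsilon^x)+\dsf=\epsilon^x$, and then prove that claim by a scaling argument of Birkhoff--Hopf type, exploiting the fact that $\dsf>\0$ forces every such solution to be strictly positive componentwise. By Lemma~\ref{lem:brouwer}.1, every limit $\epsilon_*^x(\epsilon^x_{[0]},\dsf)$ obtained from an initial condition $\epsilon^x_{[0]}\in\mathcal{H}(\dsf)$ lies in $\mathcal{H}(\dsf)$---hence is nonnegative---and satisfies $\csf(\epsilon_*^x(\epsilon^x_{[0]},\dsf))+\dsf=\epsilon_*^x(\epsilon^x_{[0]},\dsf)$. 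So it suffices to show that if $\epsilon^1,\epsilon^2\geq\0$ both satisfy this equation, then $\epsilon^1=\epsilon^2$; the common value is then $\epsilon_\#^x(\dsf)$.

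First I would record three properties of the map $\epsilon^x\mapsto\csf(\epsilon^x)$ on the nonnegative orthant, all immediate from $\csf_i(\epsilon^x)=h_{\A\FS[\epsilon^x]}(E_i^{\top})$ and $\FS[\epsilon^x]=\{x:Ex\leq\epsilon^x\}$: (i) \emph{monotonicity}, since $\0\leq\epsilon^x\leq\tilde\epsilon^x$ gives $\FS[\epsilon^x]\subseteq\FS[\tilde\epsilon^x]$ and hence $\csf(\epsilon^x)\leq\csf(\tilde\epsilon^x)$; (ii) \emph{positive homogeneity}, since $\FS[\alpha\epsilon^x]=\alpha\FS[\epsilon^x]$ for $\alpha>0$ and support functions are positively homogeneous, so $\csf(\alpha\epsilon^x)=\alpha\csf(\epsilon^x)$; (iii) \emph{nonnegativity}, since $\epsilon^x\geq\0$ implies $\0\in\FS[\epsilon^x]$ and therefore $\csf_i(\epsilon^x)\geq 0$. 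Property (iii) combined with the hypothesis $\dsf>\0$ yields the crucial observation: any nonnegative solution of $\csf(\epsilon^x)+\dsf=\epsilon^x$ satisfies $\epsilon^x=\csf(\epsilon^x)+\dsf\geq\dsf>\0$, i.e., is strictly positive componentwise.

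Then comes the main step. Given two strictly positive solutions $\epsilon^1,\epsilon^2$, set $\alpha^*:=\min_i \epsilon^2_i/\epsilon^1_i$, so that $\alpha^*\epsilon^1\leq\epsilon^2$ and $\alpha^*>0$. Using the fixed-point equations, monotonicity, and positive homogeneity,
\[
\epsilon^2 = \csf(\epsilon^2)+\dsf \;\geq\; \csf(\alpha^*\epsilon^1)+\dsf \;=\; \alpha^*\csf(\epsilon^1)+\dsf \;=\; \alpha^*\epsilon^1+(1-\alpha^*)\dsf .
\]
If $\alpha^*<1$, dividing componentwise by $\epsilon^1_i>0$ gives $\epsilon^2_i/\epsilon^1_i\geq\alpha^*+(1-\alpha^*)\dsf_i/\epsilon^1_i>\alpha^*$ for every $i$, contradicting $\alpha^*=\min_i\epsilon^2_i/\epsilon^1_i$. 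Hence $\alpha^*\geq 1$, so $\epsilon^1\leq\epsilon^2$; exchanging the roles of $\epsilon^1$ and $\epsilon^2$ yields $\epsilon^2\leq\epsilon^1$, and therefore $\epsilon^1=\epsilon^2$.

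The hard part---and the only place the hypothesis $\dsf>\0$ rather than merely $\dsf\geq\0$ is used---is establishing strict positivity of every fixed point: without it, $\alpha^*$ need not be well-defined and the final strict inequality $(1-\alpha^*)\dsf_i/\epsilon^1_i>0$ can fail, and indeed uniqueness genuinely breaks down when $\dsf$ has zero components. A secondary point requiring care is that $\csf$ is monotone and positively homogeneous only on the nonnegative orthant and that all the support-function values appearing in the chain of inequalities are finite; the latter holds because we restrict attention to finite nonnegative fixed points, on which $\csf(\epsilon^x)=\epsilon^x-\dsf$ is finite. Everything else is routine bookkeeping.
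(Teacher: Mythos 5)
Your proof is correct, and it is worth noting that the paper itself does not prove this statement at all: Lemma~\ref{lem:trodden} is imported as a citation of \cite[Theorem~3]{Trodden2016}, and the paper's own contribution begins only with Theorem~\ref{thm:induction_theorem}, which relaxes $\dsf>\0$ to $\dsf\geq\0$ by a perturbation argument that takes the present lemma as a black box. Your argument therefore supplies a self-contained proof where the paper supplies none. The route you take is the classical scaling (Birkhoff--Hopf/Krasnoselskii-type) argument for monotone, positively homogeneous maps perturbed by a strictly positive constant: you reduce, via Lemma~\ref{lem:brouwer}.1 (chaining $\csf(\epsilon_*^x)+\dsf=\bsf(\epsilon_*^x)$ with $\bsf(\epsilon_*^x)=\epsilon_*^x$), to uniqueness of nonnegative solutions of $\csf(\epsilon^x)+\dsf=\epsilon^x$; you verify that monotonicity, positive homogeneity and nonnegativity of $\csf$ on the nonnegative orthant follow directly from $\FS=\{x:Ex\leq\epsilon^x\}$ and the definition of the support function; you observe that $\dsf>\0$ forces every nonnegative fixed point to satisfy $\epsilon^x=\csf(\epsilon^x)+\dsf\geq\dsf>\0$; and you derive a contradiction at the minimizing index of $\alpha^*=\min_i\epsilon^2_i/\epsilon^1_i$ when $\alpha^*<1$. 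Each step checks out, including finiteness of the support functions along the chain (guaranteed by $\csf(\epsilon^j)=\epsilon^j-\dsf$ together with homogeneity). What this buys over the paper's bare citation is transparency about exactly where strict positivity of $\dsf$ is used --- namely, in the strict inequality $(1-\alpha^*)\dsf_i/\epsilon^1_i>0$ and in the strict positivity of the fixed points --- which is precisely the restriction Theorem~\ref{thm:induction_theorem} then works to remove.
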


We now present a brief discussion regarding the restrictions imposed by the assumption $\dsf>\0$: recalling the definition
\begin{equation*}
\dsf_i=\max_{w} E_i B w \text{ s.t. } Fw \leq \epsilon^w,
\end{equation*}
we see that $\dsf_i>0$ for all $i \in \mathbb{I}_1^{m_X}$ only if $E_i \B \neq \0$ for each $i\in \mathbb{I}_1^{m_X}$, and $\epsilon^w > \0$.  While the positivity condition $\epsilon^w>\0$ can be enforced easily through a linear constraint in Problems \eqref{eq:bilevel_problem_to_solve_1_norm_innerApp}-\eqref{eq:bilevel_problem_to_solve_1_norm_outerApp}, the former condition holds only if the additional assumption $E_i^\top \notin \mathrm{null}(B^{\top})$ (or the stronger assumption $\mathrm{rank}(B)=n_x$) is satisfied: these assumptions restrict the class of systems and RPI set parametrizations that are often encountered. Moreover, they lead to excessively conservative RPI set parametrizations. For example, an uncontrollable system would require an RPI set parametrization that always includes the origin within its interior.

%
%

In the following result, we use continuity properties of support functions to show that the uniqueness of fixed-point holds without these additional assumptions. To that end, we introduce the perturbed disturbance vector $\dsf_{\delta}:=\dsf+\delta \1$ defined for a scalar $\delta>0$.
\begin{theorem}
	\label{thm:induction_theorem}
	Suppose Assumption \ref{ass:RPI_E_param} holds and $\dsf \geq \0$, 
	then the fixed-point $\epsilon_*^x(\epsilon^x_{[0]},\dsf)$ is unique. That is, there exists an $\epsilon_{\#}^x(\dsf):=\epsilon_*^x(\epsilon^x_{[0]},\dsf)$ for all $\epsilon^x_{[0]} \in \mathcal{H}(\dsf)$ satisfying
	$\csf(\epsilon_{\#}^x(\dsf))+\dsf=\epsilon_{\#}^x(\dsf)$.
	$\hfill\square$
\end{theorem}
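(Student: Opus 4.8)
The plan is to reduce the case $\dsf \geq \0$ to the already-established strictly-positive case (Lemma~\ref{lem:trodden}) by a limiting argument in $\delta$, exploiting the perturbed vector $\dsf_\delta := \dsf + \delta\1 > \0$. For each $\delta > 0$, Lemma~\ref{lem:trodden} gives a unique fixed-point $\epsilon_\#^x(\dsf_\delta)$ satisfying $\csf(\epsilon_\#^x(\dsf_\delta)) + \dsf_\delta = \epsilon_\#^x(\dsf_\delta)$. First I would establish monotonicity: the map $\epsilon^x \mapsto \csf(\epsilon^x) + \dsf$ is monotone nondecreasing in $\dsf$ (since raising $\dsf$ only raises the iterate, and $\csf$ is monotone because it is a support function of a set scaled by the nonnegative combination encoded in the multipliers $\Lambda_{\mathbf{c}}$), so the fixed-points are ordered: $\epsilon_\#^x(\dsf_{\delta'}) \leq \epsilon_\#^x(\dsf_\delta)$ whenever $\delta' \leq \delta$. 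Combined with the uniform bound $\epsilon_\#^x(\dsf_\delta) \in \mathcal{H}(\dsf_\delta)$, i.e. $\0 \leq \epsilon_\#^x(\dsf_\delta) \leq \norm{\dsf_\delta}_\infty\,\hat{\epsilon}^x$, the family $\{\epsilon_\#^x(\dsf_\delta)\}_{\delta>0}$ is monotone and bounded, hence the limit $\bar{\epsilon}^x := \lim_{\delta \downarrow 0}\epsilon_\#^x(\dsf_\delta)$ exists.

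Next I would pass to the limit in the fixed-point equation. Since $\csf$ is the support function of a polytope depending linearly on $\epsilon^x$, it is (Lipschitz) continuous; likewise $\dsf_\delta \to \dsf$ as $\delta \downarrow 0$. Therefore $\csf(\bar{\epsilon}^x) + \dsf = \bar{\epsilon}^x$, so a fixed-point for $\dsf$ exists and $\bar{\epsilon}^x \in \mathcal{H}(\dsf)$. The crux is then \emph{uniqueness}: I must show every fixed-point $\epsilon^x \in \mathcal{H}(\dsf)$ of $\csf(\cdot)+\dsf$ equals $\bar{\epsilon}^x$. Given such an $\epsilon^x$, note $\csf(\epsilon^x) + \dsf_\delta = \epsilon^x + \delta\1 \geq \epsilon^x$, so $\epsilon^x$ is a sub-solution for the $\dsf_\delta$-iteration; starting the monotone iteration $\epsilon^x_{[k+1]} = \csf(\epsilon^x_{[k]}) + \dsf_\delta$ from $\epsilon^x_{[0]} = \epsilon^x$ then yields a nondecreasing sequence converging (by Lemma~\ref{lem:brouwer}.1, provided $\epsilon^x \in \mathcal{H}(\dsf_\delta)$, which holds since $\mathcal{H}(\dsf) \subseteq \mathcal{H}(\dsf_\delta)$) to the unique fixed-point $\epsilon_\#^x(\dsf_\delta)$; hence $\epsilon^x \leq \epsilon_\#^x(\dsf_\delta)$ for every $\delta > 0$, and letting $\delta \downarrow 0$ gives $\epsilon^x \leq \bar{\epsilon}^x$. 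For the reverse inequality, one similarly shows $\bar\epsilon^x \le \epsilon^x$: from $\csf(\epsilon^x)+\dsf = \epsilon^x$ and monotonicity of $\csf$, starting the $\dsf$-iteration at $\0$ produces a sequence dominated by $\epsilon^x$, so by Lemma~\ref{lem:brouwer}.2 the minimal fixed-point $\epsilon_*^x(\0,\dsf)$ satisfies $\epsilon_*^x(\0,\dsf) \le \epsilon^x$; and an analogous monotone-convergence sandwiching identifies $\bar\epsilon^x$ with $\epsilon_*^x(\0,\dsf)$, so $\bar\epsilon^x \le \epsilon^x$. Combining, $\epsilon^x = \bar\epsilon^x$, which proves uniqueness and lets us set $\epsilon_\#^x(\dsf) := \bar\epsilon^x$.

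The main obstacle I anticipate is making the monotone-iteration comparison rigorous at the boundary: one must carefully check that an arbitrary fixed-point in $\mathcal{H}(\dsf)$ can indeed be used as an initial condition for the $\dsf_\delta$-iteration (inclusion of the $\mathcal{H}$-boxes) and that the monotone sequences stay within the relevant boxes so Lemma~\ref{lem:brouwer} applies verbatim. The monotonicity of $\csf$ in its argument — not stated explicitly in the excerpt — needs a short justification via the Farkas/duality characterization \eqref{eq:basic_satisfied_2}: if $\epsilon^x \le \epsilon^{x\prime}$ then $\csf(\epsilon^x) = \Lambda_{\mathbf c}^\top \epsilon^x \le \Lambda_{\mathbf c}^\top \epsilon^{x\prime} = \csf(\epsilon^{x\prime})$ using a common nonnegative multiplier matrix. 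Once monotonicity and continuity of $\csf$ are in hand, the limiting argument in $\delta$ is routine.
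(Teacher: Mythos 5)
Your route is genuinely different from the paper's: you argue order-theoretically (monotonicity of the map $\epsilon^x\mapsto\csf(\epsilon^x)+\dsf$ in both arguments, sub-/super-solution comparisons, and a monotone limit in $\delta$), whereas the paper runs an induction showing that the $\dsf$- and $\dsf_{\delta}$-iterates launched from a common initial condition stay close at every index $k$, and then passes to the limit. Much of what you write is sound: the existence of $\bar\epsilon^x$, the fact that it is a fixed point, the bound $\epsilon^x\le\bar\epsilon^x$ for every fixed point $\epsilon^x\in\mathcal{H}(\dsf)$ via the sub-solution iteration, and $\epsilon_*^x(\0,\dsf)\le\epsilon^x$ via the iteration from $\0$ dominated by $\epsilon^x$. (A minor repair: justify monotonicity of $\csf$ by the set inclusion $\FS[\epsilon^x]\subseteq\FS[\epsilon^{x\prime}]$, not by ``a common multiplier matrix'' --- $\csf$ is piecewise linear and concave, so the optimal dual multiplier changes with the argument, and a fixed multiplier only yields an upper bound.)

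The genuine gap is the sentence ``an analogous monotone-convergence sandwiching identifies $\bar\epsilon^x$ with $\epsilon_*^x(\0,\dsf)$.'' Every inequality your machinery produces points the same way: $\epsilon^x_{[k]}(\dsf)\le\epsilon^x_{[k]}(\dsf_{\delta})$ gives $\epsilon_*^x(\0,\dsf)\le\epsilon_{\#}^x(\dsf_{\delta})$ and hence $\epsilon_*^x(\0,\dsf)\le\bar\epsilon^x$, and applying your sub-solution argument to the fixed point $\epsilon_*^x(\0,\dsf)$ yields that same inequality again. What you need is the reverse, $\bar\epsilon^x\le\epsilon_*^x(\0,\dsf)$, i.e.\ the interchange $\lim_{\delta\downarrow 0}\lim_{k\to\infty}\epsilon^x_{[k]}(\dsf_{\delta})\le\lim_{k\to\infty}\lim_{\delta\downarrow 0}\epsilon^x_{[k]}(\dsf_{\delta})$. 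That interchange is not automatic for a doubly monotone family (take $a_{k,\delta}=1$ if $k\ge 1/\delta$ and $0$ otherwise: nondecreasing in $k$ and in $\delta$, yet the two iterated limits are $1$ and $0$); equivalently, the minimal fixed point is a concave function of $\dsf$ and concave functions may jump at the boundary of the orthant, which is exactly the pathology to be excluded. Without this inequality you have only shown that all fixed points lie in the order interval between $\epsilon_*^x(\0,\dsf)$ and $\bar\epsilon^x$, not that this interval is a single point --- so the proof collapses precisely at the theorem's analytical crux, which is what the paper's induction (closeness of the iterates at each $k$ plus continuity of $\csf$) is designed to supply. To close your version you would need a quantitative upper bound such as $\epsilon_{\#}^x(\dsf_{\delta})\le\epsilon_*^x(\0,\dsf)+\delta K$ for a constant $K$ independent of $\delta$ (a super-solution construction), or a contraction estimate uniform in $k$.
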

\begin{proof}
	Consider the fixed-point iterations 
	\begin{equation*}\small
	\epsilon^x_{[k+1]}(\dsf):=\csf(\epsilon^x_{[k]}(\dsf))+\dsf,  \ \ \epsilon^x_{[k+1]}(\dsf_{\delta}):=\csf(\epsilon^x_{[k]}(\dsf_{\delta}))+\dsf_{\delta},
	\end{equation*}
	starting from the same initial point $\scalemath{0.95}{\epsilon^x_{[0]}(\dsf), \ \epsilon^x_{[0]}(\dsf_{\delta}) := \epsilon^x_{[0]} \in \mathcal{H}(\dsf_{\delta}).}$
	We first show by induction that at all iterations $k$, the limit
	\begin{equation}
	\label{eq:k_distance}
	\lim\limits_{\delta \to 0^{+}} \norm{\epsilon^x_{[k]}(\dsf)-\epsilon^x_{[k]}(\dsf_{\delta})} = 0.
	\end{equation}
	At the first fixed-point iteration we have $\epsilon^x_{[1]}(\dsf):=\csf(\epsilon^x_{[0]})+\dsf$ and $\epsilon^x_{[1]}(\dsf_\delta):=\csf(\epsilon^x_{[0]})+\dsf_\delta$.
	Hence, by definition of $\dsf_{\delta}$,~\eqref{eq:k_distance} holds for $k=1$, i.e., $\lim\limits_{\delta \to 0^{+}} \norm{\epsilon^x_{[1]}(\dsf)-\epsilon^x_{[1]}(\dsf_{\delta})} = 0.$
	We will now proceed by induction by exploiting the fact that if \eqref{eq:k_distance} holds at iteration index $k$, then 
	\begin{equation}
	\label{eq:k_hypth}
	\epsilon^x_{[k]}(\dsf) \to \epsilon^x_{[k]}(\dsf_{\delta}) \ \ \ \text{as} \ \ \ \delta \to 0^+.
	\end{equation}
	At iteration index $k+1$ we have
	{
		\begin{align}
		\label{eq:k_plus_1_distance}
		&\lim\limits_{\delta \to 0^{+}} \norm{\epsilon^x_{[k+1]}(\dsf)-\epsilon^x_{[k+1]}(\dsf_{\delta})}  \\
		=& \lim\limits_{\delta \to 0^{+}} \norm{\csf(\epsilon^x_{[k]}(\dsf))+\dsf-\csf(\epsilon^x_{[k]}(\dsf_{\delta}))-\dsf_{\delta}} \nonumber \\
		\leq &
		\lim\limits_{\delta \to 0^{+}} \norm{\csf(\epsilon^x_{[k]}(\dsf))-\csf(\epsilon^x_{[k]}(\dsf_{\delta}))}+\lim\limits_{\delta \to 0^{+}} \delta. \nonumber
		\end{align}
	}
	Since $\csf$ is a continuous function of $\dsf$, for all $\dsf\geq0$, we have
	\begin{equation}
	\label{eq:cont_c}
	\lim\limits_{\epsilon^x_{[k]}(\dsf) \to \epsilon^x_{[k]}(\dsf_{\delta})} \norm{\csf(\epsilon^x_{[k]}(\dsf))-\csf(\epsilon^x_{[k]}(\dsf_{\delta}))} = 0.
	\end{equation}
	Hence, \eqref{eq:k_hypth} implies
	\begin{equation*}
	\lim\limits_{\delta \to 0^{+}} \norm{\epsilon^x_{[k+1]}(\dsf)-\epsilon^x_{[k+1]}(\dsf_{\delta})}=0.
	\end{equation*}
	Thus, by induction, \eqref{eq:k_distance} holds for all iteration indices $k$.
	Since Assumptions \ref{ass:RPI_E_param} holds, then according to Lemma \ref{lem:brouwer}, the iterates $\{\epsilon^x_{[k]}(\dsf)\}$ and $\{\epsilon^x_{[k]}(\dsf_{\delta})\}$ converge to fixed-points $\epsilon_*^x(\epsilon^x_{[0]},\dsf)$ and $\epsilon_*^x(\epsilon^x_{[0]},\dsf_{\delta})$ respectively.
	Therefore, \eqref{eq:k_distance} implies
	\begin{equation}
	\label{eq:all_d_sequences_converge}
	\begin{matrix}
	&\lim\limits_{\delta \to 0^{+}} \norm{\epsilon_*^x(\epsilon^x_{[0]},\dsf)-\epsilon_{\#}^x(\dsf_{\delta})}=0, & \forall \ \ \epsilon^x_{[0]} \in \mathcal{H}(\dsf_{\delta}).
	\end{matrix}
	\end{equation}
	where we used the fact that $\epsilon_{\#}^x(\dsf_{\delta})$ is unique and independent of $\epsilon^x_{[0]}$, as proven in Lemma~\ref{lem:trodden}.
	 Finally, we note that the function $\epsilon_*^x(\epsilon^x_{[0]},\dsf)$ is well defined for all $\dsf\geq \0$ from Lemma \ref{lem:brouwer}, and is continuous in $\dsf\geq \0$ since it is the composition of continuous functions. This implies that the limit $\lim\limits_{\delta \to 0^{+}} \epsilon_{\#}^x(\dsf_{\delta})$ is unique.
 Hence, \eqref{eq:all_d_sequences_converge} implies that there exists a unique fixed-point $\epsilon_{\#}^x(\dsf)$, i.e., 
	\begin{equation*}
	\begin{matrix}
	&&\epsilon_{\#}^x(\dsf):=\epsilon_*^x(\epsilon^x_{[0]},\dsf),&& \forall \ \ \epsilon^x_{[0]} \in \mathcal{H}(\dsf) \subset \mathcal{H}(\dsf_{\delta}),
	\end{matrix}
	\end{equation*}
	thus concluding the proof.
\end{proof}

Having shown that there exists a unique fixed-point $\epsilon_{\#}^x(\dsf)$, we proceed by replacing the lower-level optimization problems in \eqref{eq:bilevel_problem_to_solve_1_norm_innerApp} and \eqref{eq:bilevel_problem_to_solve_1_norm_outerApp} by the equivalent condition $\csf(\epsilon^x)+\dsf=\epsilon^x$.
\begin{remark}
	\label{remark:nonzero}
	While all the results presented assume that $\0 \in \WS$, there exist cases where it is not known a priori if the origin belongs to the disturbance set. Such cases can be accommodated in the formulation of Problems \eqref{eq:bilevel_problem_to_solve_1_norm_innerApp}-\eqref{eq:bilevel_problem_to_solve_1_norm_outerApp} by considering the disturbance set parametrization $\{\bar{w}\}\oplus\WS$, where $\0 \in \WS$ if $\epsilon^w \geq \0$, and $\bar{w}$ represents the origin offset. Then, an RPI set parametrized as $\{\bar{x}\}\oplus \FS$ satisfies $\{\A \bar{x}+\B \bar{w}\} \oplus \A\FS \oplus \B\WS \subseteq \{\bar{x}\}\oplus \FS$. From basic properties of support functions, this inclusion is equivalent to $E\A \bar{x}+E\B \bar{w}-E\bar{x}+\csf(\epsilon^x)+\dsf(\epsilon^w) \leq \bsf(\epsilon^x)$. The first part of this inequality can then be eliminated by using the steady-state offset value $\bar{x}=(\I-\A)^{-1}\B\bar{w}$. Hence, by appending the optimization variables $\bar{w}$ and $\bar{x}$, along with the equality constraint $\bar{x}=(\I-\A)^{-1}\B\bar{w}$, Problems \eqref{eq:bilevel_problem_to_solve_1_norm_innerApp},\eqref{eq:bilevel_problem_to_solve_1_norm_outerApp} can be modified to accommodate disturbance sets not including the origin. Note that the corresponding output-set is then $\{\C \bar{x}+\D\bar{w}\}\oplus \mathbb{Y}(\epsilon^x,\epsilon^w)$: the inclusions with respect to $\Y$ should be modeled by considering this offset. Since this modification is relatively straightforward, we skip further details because of space constraints.
	 $\hfill\square$
\end{remark}
\section{Characterizing Hausdorff distance and encoding Inclusion constraints}
\label{sec:inclusion_con}
 In this section, we use the inclusion encoding formulation presented in \cite{Sadraddini2019} under the following assumption on the output-set $\mathcal{Y}=\{y:Gy\leq g\}$ that is slightly stronger than Assumption \ref{ass:feasibility_assumptions}.
\begin{assumption}
	\label{ass:Y_full_dimensional}
	The set $\mathcal{Y}$ is full-dimensional, i.e., there exists some $\hat{y} \in \R^{n_y}$ and a scalar $\hat{\epsilon}>0$ such that $\{\hat{y}\} \oplus \hat{\epsilon}\mathcal{B}^{n_y}_{2} \subset \Y$. $\hfill\square$
\end{assumption}
For the \textit{inner-approximation} problem, this assumption implies that vector $g>\0$. For the \textit{outer-approximation problem}, this assumption implies system \eqref{eq:system} must be output-controllable for feasibility.
\subsection{Inner-approximation Problem \eqref{eq:bilevel_problem_to_solve_1_norm_innerApp}}
We use the Hausdorff distance given by
\begin{align}
\label{eq:dist_inner_defn}
\scalemath{0.94}{d_{\mathrm{H}}(\mathbb{Y}(\epsilon^x,\epsilon^w),\mathcal{Y})=\min_{\epsilon\geq \0}\left\{\sum_{j=1}^{m_B} \epsilon_j  \text{ s.t. } \mathcal{Y}\subseteq\mathbb{Y}(\epsilon^x,\epsilon^w) \oplus \mathbb{B}(\epsilon)\right\}},
\end{align}
for Problem \eqref{eq:bilevel_problem_to_solve_1_norm_innerApp}, where $\mathbb{B}(\epsilon):=\{y:H y \leq \epsilon\}$ with $\epsilon \in \R^{m_B}$, and the vectors $H_j^{\top}$ are sampled from the surface of $\mathcal{B}^{n_y}_{2}$. This choice of Hausdorff distance offers a practical way to model coverage of the set $\Y$ in directions indicated by the rows of $H$.  

In order to encode the inclusion $\mathcal{Y}\subseteq\mathbb{Y}(\epsilon^x,\epsilon^w) \oplus \mathbb{B}(\epsilon)$, we use the sufficient conditions presented in \cite[Theorem 1]{Sadraddini2019}, which states that the inclusion holds if there exist variables $z^{\mathrm{I}}:=\{\Sigma^{\mathrm{I}},\Theta^{\mathrm{I}},\Pi^{\mathrm{I}}\}$ with
$\Sigma^{\mathrm{I}} \in \R^{(n_x+n_w+n_y)\times n_y}$, $\Theta^{\mathrm{I}} \in \R^{n_x+n_w+n_y}$ and $\Pi^{\mathrm{I}} \in \R^{(m_X+m_W+m_B)\times m_Y}$ satisfying  $(\epsilon^x,\epsilon^w,\epsilon,z^{\mathrm{I}}) \in \Xi^{\mathrm{I}}$, where
\begin{equation*}\scriptsize
\Xi^{\mathrm{I}}:=\begin{Bmatrix}(\epsilon^x,\epsilon^w,\epsilon,z^{\mathrm{I}}):\begin{matrix}
[\C \ \D \ \I]\Sigma^{\mathrm{I}}=\I_{n_y}, \ [\C \ \D \ \I]\Theta^{\mathrm{I}}=\0_{n_y},
 \\
\Pi^{\mathrm{I}} \geq \0, \ \ \Pi^{\mathrm{I}} G = \begin{bmatrix} E && \\ & F & \\ & & H \end{bmatrix}\Sigma^{\mathrm{I}}, \vspace{2pt} \\
\Pi^{\mathrm{I}} g \leq \begin{bmatrix}
\epsilon^x \\ \epsilon^w \\ \epsilon
\end{bmatrix}+\begin{bmatrix} E && \\ & F & \\ & & H \end{bmatrix}\Theta^{\mathrm{I}}
\end{matrix} \end{Bmatrix},
\end{equation*}
is a set of linear equality and inequality constraints, and $\Pi^{\mathrm{I}}$ is a matrix of nonnegative elements. Since we enforce $\epsilon^w\geq \0$, and $\epsilon^x \geq \0$ by construction, the set $\mathbb{Y}(\epsilon^x,\epsilon^w)$ is always nonempty. Then, there always exists some $\epsilon \geq \0$ such that the inclusion $\Y \subseteq \mathbb{Y}(\epsilon^x,\epsilon^w) \oplus \mathbb{B}(\epsilon)$ holds. Under Assumption \ref{ass:Y_full_dimensional}, it then follows from \cite[Theorem 1]{Sadraddini2019} that there always exist variables $z^{\mathrm{I}}$ such that $\Xi^{\mathrm{I}}$ is nonempty. Since this inclusion encoding is only sufficient, the computed value of Hausdorff distance is an upper-bound to the actual value.

In order to encode the inclusion $\mathbb{Y}(\epsilon^x,\epsilon^w) \subseteq \Y$, we use the support functions defined as
\begin{align*}
&\lsfy_k\left (\epsilon^x\right ):=h_{\C\FS}\left (G_k^\top\right ), && \msfy_k\left (\epsilon^w\right ):=h_{\D\WS}\left (G_k^\top\right ),
\end{align*}
for each $k \in \mathbb{I}_1^{m_Y}$ to enforce the inequality $\lsfy(\epsilon^x)+\msfy(\epsilon^w) \leq g$. 

Hence, using the definition in  \eqref{eq:dist_inner_defn}, RPI set equivalence in \eqref{eq:main_result_RPI}, and the proposed inclusion encodings, we write Problem \eqref{eq:bilevel_problem_to_solve_1_norm_innerApp} as
\begin{align}
\label{eq:bilevel_problem_to_solve_1_norm_innerApp_withHD}
\scalemath{0.9}{\min_{\epsilon^w \geq \0,\epsilon^x,\epsilon\geq \0,z^{\mathrm{I}}}} &\hspace{5pt} \scalemath{0.9}{\sum_{j=1}^{m_B} \epsilon_j} \\
& \hspace{-15pt}\text{ s.t. } \csf(\epsilon^x)+\dsf(\epsilon^w)=\epsilon^x, \nonumber\\
& \hspace{3pt} \lsfy(\epsilon^x)+\msfy(\epsilon^w) \leq g, \nonumber \\
& \hspace{3pt} (\epsilon^x,\epsilon^w,\epsilon,z^{\mathrm{I}}) \in \Xi^{\mathrm{I}}. \nonumber
\end{align}
\subsection{Outer-approximation Problem \eqref{eq:bilevel_problem_to_solve_1_norm_outerApp}}
Similar to \eqref{eq:dist_inner_defn}, we use the Hausdorff distance given by
\begin{align}
\label{eq:dist_outer_defn}
\scalemath{0.94}{d_{\mathrm{H}}(\mathbb{Y}(\epsilon^x,\epsilon^w),\mathcal{Y})=\min_{\epsilon\geq \0}\left\{\sum_{j=1}^{m_B} \epsilon_j  \text{ s.t. } \mathbb{Y}(\epsilon^x,\epsilon^w)\subseteq \mathcal{Y}\oplus \mathbb{B}(\epsilon)\right\}}
\end{align}
\noindent for Problem \eqref{eq:bilevel_problem_to_solve_1_norm_outerApp}.
Then, we approximately encode the inclusion $\mathbb{Y}(\epsilon^x,\epsilon^w)\subseteq \mathcal{Y}\oplus \mathbb{B}(\epsilon)$ using the support functions defined as
\begin{align*}
&\lsfb_j\left (\epsilon^x\right ):=h_{\C\FS}\left (H^{\top}_j\right ), && \msfb_j\left (\epsilon^w\right ):=h_{\D\WS}\left (H^{\top}_j\right ), \\
&\gsfb_j:=h_{\Y}\left (H^{\top}_j\right ),
\end{align*}
for each $j \in \mathbb{I}_1^{m_B}$ through the inequality $\lsfb(\epsilon^x)+\msfb(\epsilon^w)-\epsilon \leq \gsfb$. The approximation results from the fact that this condition is only necessary for the inclusion to hold. Hence, the computed value of Hausdorff distance is a lower-bound to the actual value.

In order to encode the inclusion $\Y \subseteq \oplus_{t=0}^N \C\A^t \B \WS \oplus \D \WS$, we again use the sufficient conditions presented in \cite[Theorem 1]{Sadraddini2019}, which states that the inclusion holds if there exist variables $z^{\mathrm{O}}:=\{\Sigma^{\mathrm{O}},\Theta^{\mathrm{O}},\Pi^{\mathrm{O}}\}$ with dimensions $\Sigma^{\mathrm{O}} \in \R^{(N+2)n_w \times n_y}$, $\Theta^{\mathrm{O}} \in \R^{(N+2)n_w}$, $\Pi^{\mathrm{O}} \in \R^{(N+2)m_W \times m_Y}$ satisfying $(\epsilon^w,z^{\mathrm{O}})\in \Xi^{\mathrm{O}}$, where
\begin{equation*}\scriptsize
\Xi^{\mathrm{O}}:=\begin{Bmatrix}(\epsilon^w,z^{\mathrm{O}}):\begin{matrix}
[\C\B \ \C\A\B \ \cdots \C\A^N\B \ \D]\Sigma^{\mathrm{O}} = \I_{n_y}, \\
[\C\B \ \C\A\B \ \cdots \C\A^N\B \ \D]\Theta^{\mathrm{O}} = \0_{n_y}, \\
\Pi^{\mathrm{O}} \geq \0, \ \ \Pi^{\mathrm{O}} G = (\I_{N+2} \otimes F)\Sigma^{\mathrm{O}}, \\
\Pi^{\mathrm{O}} g \leq (\1_{N+2} \otimes \epsilon^w)+(\I_{N+2} \otimes F)\Theta^{\mathrm{O}}
\end{matrix} \end{Bmatrix}
\end{equation*}
is a set of linear equality and inequality constraints. Here, $\otimes$ denotes the Kronecker product. The set $\Xi^{\mathrm{O}}$ is nonempty under Assumption \ref{ass:Y_full_dimensional} under the same reasoning as that for set $\Xi^{\mathrm{I}}$.

Hence, using the definition in  \eqref{eq:dist_outer_defn}, RPI set equivalence in \eqref{eq:main_result_RPI}, and the proposed inclusion encodings, we write Problem \eqref{eq:bilevel_problem_to_solve_1_norm_outerApp} as
\begin{align}
\label{eq:bilevel_problem_to_solve_1_norm_outerApp_withHD}
\scalemath{0.9}{\min_{\epsilon^w \geq \0,\epsilon^x,\epsilon\geq \0,z^{\mathrm{O}}}} &\hspace{5pt} \scalemath{0.9}{\sum_{j=1}^{m_B} \epsilon_j} \\
& \hspace{-15pt}\text{ s.t. } \csf(\epsilon^x)+\dsf(\epsilon^w)=\epsilon^x, \nonumber\\
& \hspace{3pt} \lsfb(\epsilon^x)+\msfb(\epsilon^w)-\epsilon \leq g^{\mathrm{O}}, \nonumber \\
& \hspace{3pt} (\epsilon^w,z^{\mathrm{O}})\in \Xi^{\mathrm{O}}. \nonumber
\end{align}
\section{Numerical optimization}
\label{sec:SQP-GS}
In this section, we adopt a penalty function approach to solve Problems \eqref{eq:bilevel_problem_to_solve_1_norm_innerApp_withHD}-\eqref{eq:bilevel_problem_to_solve_1_norm_outerApp_withHD}. To that end, we first note that  $\epsilon^w$ might be unbounded above in both these problems in case of a nonminimal representation of $\WS$. We tackle this issue by introducing the support function 
\begin{align*}
\qsf_t\left (\epsilon^w\right ):=h_{\WS}\left (F^{\top}_t\right ) && \text{for each $t \in \mathbb{I}_1^{m_W}$},
\end{align*}
such that $\qsf(\epsilon^w)=\epsilon^w$ if and only if $\WS$ is in minimal representation.
Then, we modify the objective function of Problems \eqref{eq:bilevel_problem_to_solve_1_norm_innerApp_withHD}-\eqref{eq:bilevel_problem_to_solve_1_norm_outerApp_withHD} as
\begin{align}
\label{eq:minimal_W_obj}
\sum_{j=1}^{m_B} \epsilon_j + \sigma \sum_{t=1}^{m_W}(\epsilon^w_t - \qsf_t(\epsilon^w)),
\end{align}
where $\sigma>0$ is some scalar tuning parameter. This modification ensures that ($a$) the solution $\epsilon^w$ is such that $\WS$ is in a minimal representation; ($b$) the solution is not perturbed, since $\WS[\qsf(\epsilon^w)]=\WS$. 
This modification is not required for $\epsilon^x$, since the RPI constraint enforces uniqueness of $\epsilon^x$ for a given value of $\epsilon^w$. 

Then, we propose to use the penalty function approach presented in \cite{Anandalingam1990} to solve the problems with the modified objective function. In the rest of this section, we present the approach for the \textit{inner-approximation} problem. Since a very similar method follows for the \textit{outer-approximation} problems, we skip further details because of space constraints. 
Considering Problem \eqref{eq:bilevel_problem_to_solve_1_norm_innerApp_withHD} along with objective function \eqref{eq:minimal_W_obj}, we note that all the LPs formulating the support functions are feasible and bounded for every bounded $\epsilon^x \geq \0$ and $\epsilon^w \geq \0$. Hence, they satisfy strong duality \cite{Bard2006}. This property is exploited in the penalty function algorithm to compute local optima. Introducing the optimal primal and dual variables
\begin{align*}
\begin{matrix}
\text{LP} & \csf_i(\epsilon^x) & \dsf_i(\epsilon^w) & \lsfy_k(\epsilon^x) & \msfy_k(\epsilon^w) & \qsf_t(\epsilon^w) \\\hline
\text{Primal} & \za^{\csf_i} & \za^{\dsf_i} & \za^{\lsfy_k} & \za^{\msfy_k} & \za^{\qsf_t} \\
\text{Dual} & \lambda^{\csf_i} & \lambda^{\dsf_i} & \lambda^{\lsfy_k} & \lambda^{\msfy_k} & \lambda^{\qsf_t}
\end{matrix}
\end{align*} 
strong duality of the LPs implies that these values satisfy the primal and dual feasibility conditions, and have a zero duality gap.
For the support function $\csf_i(\epsilon^x)$, these conditions for a given $\epsilon^x \geq \0$ are $E \za^{\csf_i} \leq \epsilon^x$, $E^{\top} \lambda^{\csf_i} = A^{\top} E_i ^{\top}$, $\lambda^{\csf_i} \geq \0_{m_X}$ and ${\lambda^{\csf_i}}^{\top} \epsilon^x = E_i \A \za^{\csf_i}$. Introducing these variables along with the optimality conditions, Problem \eqref{eq:bilevel_problem_to_solve_1_norm_innerApp_withHD} is reformulated to a single-level problem. Within this reformulation, a penalty function approach is followed to penalize the duality gap using a constant $\mathbf{K}>0$ to obtain
\begin{align}
\label{eq:single_level_inner}
\scalemath{0.9}{\min_{\epsilon^w,\epsilon^x,\epsilon,z^{\mathrm{I}},\za^*,\lambda^*}} &\hspace{5pt} \scalemath{0.9}{\sum_{j=1}^{m_B} \epsilon_j + \sigma \sum_{t=1}^{m_W}(\epsilon^w_t - F_t \za^{\qsf_t})+\mathbf{K}\mathcal{P}(\epsilon^x,\epsilon^w,\za^*,\lambda^*)} \\
& \hspace{-15pt}\text{ s.t. } E_i \A \za^{\csf_i}+E_i \B \za^{\dsf_i}=\epsilon^x_i, &&\hspace{-40pt} \forall \ i \in \mathbb{I}_1^{m_X}, \nonumber\\
& \hspace{3pt} G_k \C \za^{\lsfy_k}+G_k \D \za^{\msfy_k} \leq g_k, &&\hspace{-40pt} \forall \ k \in \mathbb{I}_1^{m_Y}, \nonumber \\
& \hspace{3pt} (\epsilon^x,\epsilon^w,\epsilon,z^{\mathrm{I}}) \in \Xi^{\mathrm{I}},  \nonumber \\
& \hspace{3pt} E \za^{\csf_i}\leq \epsilon^x, \ \ E^{\top} \lambda^{\csf_i} = \A^{\top} E_i^{\top}, &&\hspace{-40pt} \forall \ i \in \mathbb{I}_1^{m_X}, \nonumber\\
& \hspace{3pt} F \za^{\dsf_i}\leq \epsilon^w, \ \ F^{\top} \lambda^{\dsf_i} = \B^{\top} E_i^{\top}, &&\hspace{-40pt} \forall \ i \in \mathbb{I}_1^{m_X}, \nonumber\\
& \hspace{3pt} E \za^{\lsfy_k}\leq \epsilon^x, \ \ E^{\top} \lambda^{\lsfy_k} = \C^{\top} G_k^{\top}, &&\hspace{-40pt} \forall \ k \in \mathbb{I}_1^{m_Y}, \nonumber\\
& \hspace{3pt} F \za^{\msfy_k}\leq \epsilon^w, \ \ F^{\top} \lambda^{\msfy_k} = \D^{\top} G_k^{\top}, &&\hspace{-40pt} \forall \ k \in \mathbb{I}_1^{m_Y}, \nonumber\\
& \hspace{3pt} F \za^{\qsf_t}\leq \epsilon^w, \ \ F^{\top} \lambda^{\qsf_t} = F_t^{\top}, &&\hspace{-40pt} \forall \ t \in \mathbb{I}_1^{m_W}, \nonumber \\
& \hspace{3pt} \epsilon^x \geq \0, \ \epsilon^w \geq \0, \ \epsilon \geq \0, \ \lambda^* \geq \0, \nonumber
\end{align}
where $z^*\in \R^{m_X(n_x+n_w)+m_Y(n_x+n_w)+m_Wn_w}$ denotes the optimal primal variables, $\lambda^*\in \R^{m_X(m_X+m_W)+m_Y(m_X+m_W)+m_W^2}$ denotes the optimal dual variables, and the penalty function
\begin{align*}
\scalemath{1}{
	\begin{matrix}\mathcal{P}(\epsilon^x,\epsilon^w,\za^*,\lambda^*):=\sum_{t=1}^{m_W}(\lambda^{\qsf_t^{\top}}\epsilon^w - F_t \za^{\qsf_t})+ \\
	&\hspace{-135pt}\sum_{i=1}^{m_X}(\lambda^{\csf_i^{\top}}\epsilon^x - E_i \A \za^{\csf_i}+\lambda^{\dsf_i^{\top}}\epsilon^w - E_i \B \za^{\dsf_i})+ \\
	&\hspace{-130pt}\sum_{k=1}^{m_Y}(\lambda^{\lsfy_k^{\top}}\epsilon^x - G_k \C \za^{\lsfy_k}+\lambda^{\msfy_k^{\top}}\epsilon^w - G_k \D \za^{\msfy_k})\end{matrix}
}
\end{align*}
penalizes the duality gap of the support function LPs. We denote Problem \eqref{eq:single_level_inner} as $\mathcal{F}(\epsilon^x,\epsilon^w,\epsilon,z^{\mathrm{I}},\za^*,\lambda^*,\mathbf{K})$. 
The main idea behind the approach presented in \cite{Anandalingam1990} is that there exists a penalty parameter $\mathbf{K}^*$ such that, if Problem \eqref{eq:single_level_inner} is solved with $\mathbf{K} > \mathbf{K}^*$, then the duality gap $\mathcal{P}(\epsilon^x,\epsilon^w,\za^*,\lambda^*)=0$ at the optimal solution, and this solution also solves the original Problem \eqref{eq:bilevel_problem_to_solve_1_norm_innerApp_withHD} with objective function \eqref{eq:minimal_W_obj}. Then in order to solve Problem \eqref{eq:single_level_inner}, an iterative algorithm is proposed, with each iteration composed of solving two LPs.
%

Denoting an iteration by the subscript $\{l\}$, the first LP solved is $\mathcal{F}(\epsilon^x,\epsilon^w,\epsilon,z^{\mathrm{I}},\za^*,\lambda^*_{\{l-1\}},\mathbf{K}_{\{l-1\}})$. Using the solution $(\epsilon_{\{l\}}^x,\epsilon_{\{l\}}^w,\epsilon_{\{l\}},z_{\{l\}}^{\mathrm{I}},\za_{\{l\}}^*)$ of this problem, the next step consists of solving the LP $\mathcal{F}(\epsilon_{\{l\}}^x,\epsilon_{\{l\}}^w,\epsilon_{\{l\}},z_{\{l\}}^{\mathrm{I}},\za_{\{l\}}^*,\lambda^*,\mathbf{K}_{\{l-1\}})$ for the variables $\lambda^*_{\{l\}}$. Finally, if the obtained values solve Problem \eqref{eq:single_level_inner} and duality gap is zero, the algorithm is terminated. Else, the procedure is repeated with $\mathbf{K}_{\{l\}} \geq \mathbf{K}_{\{l-1\}}$. This algorithm was shown to converge to a local optimal solution of Problem \eqref{eq:bilevel_problem_to_solve_1_norm_innerApp_withHD} in \cite{Anandalingam1990}. The approach was further extended in \cite{White1993} to obtain the global minimizer, and future work focuses on an efficient implementation of this method.
\begin{remark}
	We propose to initialize the optimization algorithm using the scaling $\zeta\geq0$ as $\epsilon^w_{\{0\}}=\zeta \1$ and $\epsilon^x_{\{0\}}=\zeta \hat{\epsilon}^x_{\{0\}}$, where $\hat{\epsilon}^x_{\{0\}}$ satisfies $\csf(\hat{\epsilon}^x_{\{0\}})+\dsf(\1)=\hat{\epsilon}^x_{\{0\}}$. This value can be computed using the one-step procedure in \cite{Trodden2016}, and $\zeta$ can be selected by solving an LP that enforces desired inclusions with respect to the output-set $\mathcal{Y}$. The dual variables corresponding to these LPs can be set as $\lambda^*_{\{0\}}$. We skip further details because of space constraints. $\hfill\square$
\end{remark}
	\begin{remark}
		Alternative procedures to compute the disturbance sets can be derived from the methods presented in \cite{Tahir2013,Tahir2015,Chengyuan2019}, by formulating their optimization problems with $\epsilon^w$ as an optimization variable and enforcing a fixed feedback gain. While the formulation in \cite{Tahir2013,Tahir2015} involves solving LPs, the \textit{reduced-complexity} polytopes can be excessively conservative. The formulation in \cite{Chengyuan2019} accommodates \textit{full-complexity} polytopes, with the nonlinear terms in the resulting optimization problem dealt with through a Newton-type procedure. Comparison with this method is a subject of future investigation.
 $\hfill\square$
	\end{remark}
\section{Numerical examples}
\label{sec:numerical_example}
We now present two examples, with the first related to the \textit{inner-approximation} problem, and the second to the  \textit{outer-approximation} problem. These results are obtained using the penalty function algorithm discussed in the previous section. CPLEX LP solver \cite{cplex2009v12} was used to solve the LPs. The sets are plotted using plotting tools from the Multi-Parametric Toolbox \cite{MPT3}. For both the examples, we use the set $\mathbb{B}(\epsilon)=\{y:\begin{bmatrix} \I & -\I \end{bmatrix}^{\top} y \leq \epsilon\}$ to define the Hausdorff distance in \eqref{eq:dist_inner_defn}-\eqref{eq:dist_outer_defn}, and choose $\sigma=1$ in \eqref{eq:minimal_W_obj}.
\subsection{Computation of safe reference-sets for supervisory control}
\begin{figure}[t]
	\centering
	\hspace{0cm}
	\resizebox{1\linewidth}{!}{
		\begin{tikzpicture}
		\begin{scope}[shift={(0,0)}]
		\node[draw=none,fill=none](ex_1) at (0,0) {\includegraphics[trim=218 130 216 125,clip,scale=0.7]{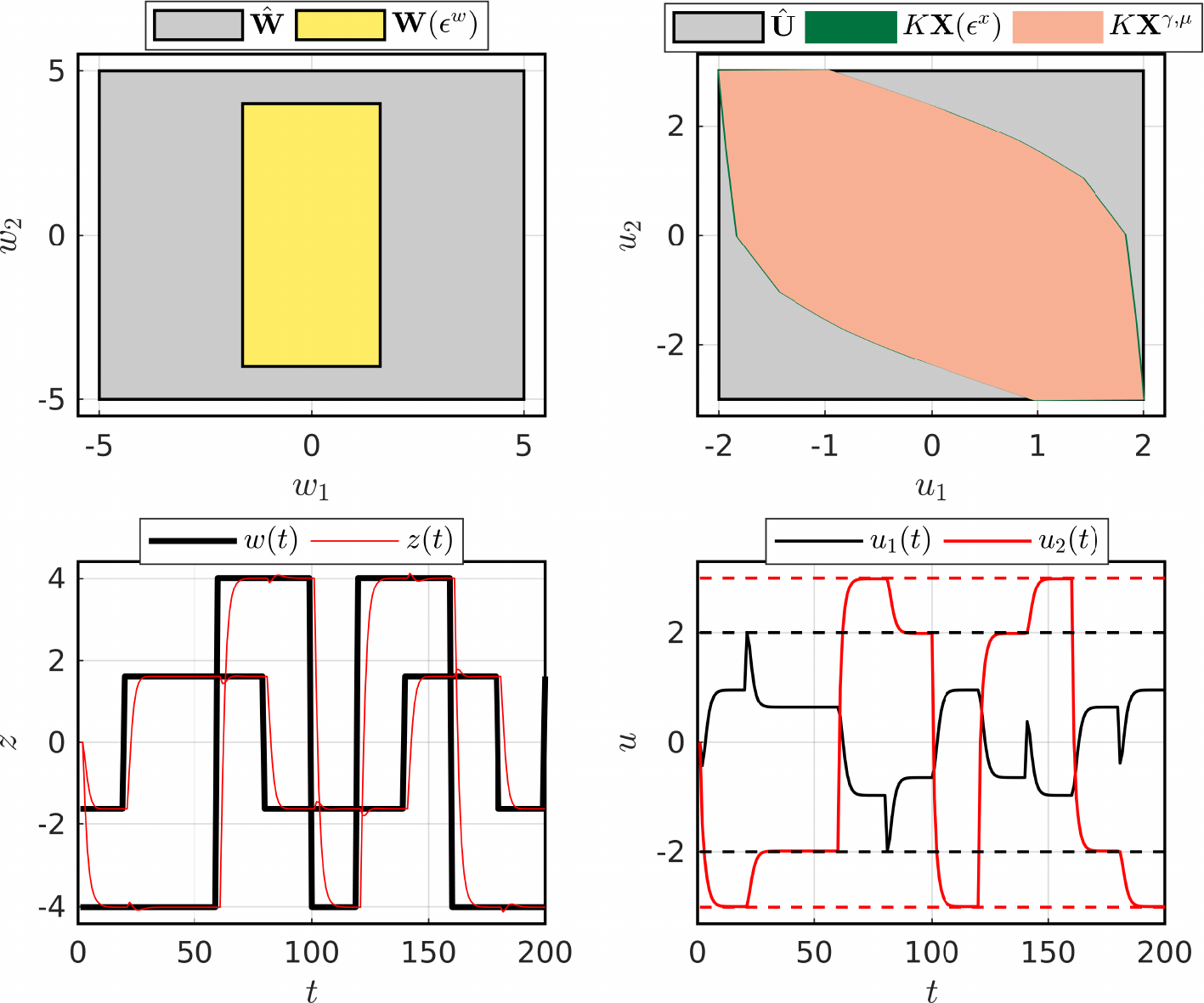}};
		\node[](box_W) [above right=-0.45cm and -6.92cm of ex_1,fill=white,minimum height=0.28cm,minimum width=0.73cm] {$$ };
		\node[](text_W) [above right=-0.56cm and -7.02cm of ex_1,fill=none,minimum size=0.2cm] {$\scalemath{0.7}{\WS}$ };
		\node[](box_X1) [above right=-0.32cm and 3.5cm of box_W,fill=white,minimum height=0.28cm,minimum width=0.9cm] {$$ };
		\node[](text_X1) [above right=-0.49cm and 3.4cm of text_W,fill=none,minimum size=0.2cm] {$\scalemath{0.7}{K\FS}$ };
		\node[](box_X2) [above right=-0.3cm and 0.81cm of box_X1,fill=white,minimum height=0.28cm,minimum width=0.8cm] {$$ };
		\node[](text_X2) [above right=-0.41cm and 0.75cm of text_X1,fill=none,minimum size=0.2cm] {$\scalemath{0.7}{K\mathcal{X}^{\gamma,\mu}}$ };
		\end{scope}
		\end{tikzpicture}}
	\captionsetup{width=1\linewidth}
	\caption{Results of solving the \textit{inner-approximation} problem. Tight RPI set is computed with parameters $\gamma=10^{-5}$, $\mu = 10^{-6}$. Bottom-left plot shows the tracking performance with $w$ sampled from the vertices of $\WS$. Bottom-right plot shows resulting closed-loop input response. Observe that the input bounds are respected. }
	\label{figure:R1_inner_problem_supervisor_integrator}
	\end{figure}
	We consider the system
	\begin{equation*}\small
	z(t+1)=\begin{bmatrix} 1.1 &0.2\\ -0.3& 0.4\end{bmatrix} z(t)+\begin{bmatrix} 1& 0 \\ 0.1 & 1 \end{bmatrix} u(t)
	\end{equation*}
	with input-constraints $u \in\hat{\mathbf{U}}:=\{u:|u| \leq \begin{bmatrix} 2 & 3 \end{bmatrix}^{\top}\}$. We assume that the system is equipped with an LQI-tracking controller such that $z$ tracks a reference signal $w$: an integral-action state $q$ with dynamics $q(t+1)=q(t)+z(t)-w(t)$ is appended, and the state $x = [z^{\top} \  q^{\top}]^{\top}$ is introduced. Then, an LQI feedback gain $K=\begin{bmatrix}-1.19&   -0.1439 &  -0.3154  &  0.0213\\
	0.2777  & -0.6497 &  -0.0037  & -0.3724\end{bmatrix}$ is computed corresponding to matrices $Q=\begin{bmatrix} \I_2 & \0 \\ \0 & 0.5\I_2 \end{bmatrix}$  and $R=\I_2$. The resulting closed-loop system with $u=Kx$ has the dynamics
	\begin{equation*}
	x(t+1)=\scalemath{0.75}{\begin{bmatrix}
		-0.09&  0.0561  & -0.3154  &  0.0213\\
		-0.1413  & -0.2641 &  -0.0353  & -0.3702\\
		1&     0   & 1 &        0\\
		0 &   1 &        0  &  1
		\end{bmatrix}}x(t)+\scalemath{0.75}{\begin{bmatrix} 0 & 0 \\ 0 & 0 \\ -1 & 0 \\ 0 & -1 \end{bmatrix}} w(t).
	\end{equation*}
	
	For this system, we aim to design a supervisory controller that saturates the reference signal such that input-constraints are respected: we compute the largest reference saturation limits $\epsilon^w =[\bar{w}_1 \ \ \bar{w}_2]^{\top}$ such that for all $w \in \WS=\{w:|w_1|\leq \bar{w}_1, |w_2| \leq \bar{w}_2\}$, we have $u \in \hat{\mathbf{U}}$. Moreover, the supervisory controller does not have access to the state $x(t)$ of the system, which implies the set $\WS$ should guarantee input-constraint satisfaction for all reachable $x$.

	In order to compute these bounds, we note that if $w \in \WS$, the state of the closed-loop system always belongs to the mRPI set as $x \in \mathcal{X}_{\mathrm{m}}(\epsilon^w)$ (provided $x(0) \in \mathcal{X}_{\mathrm{m}}(\epsilon^w)$). Then, the condition $u \in \hat{\mathbf{U}}$ in equivalent to the inclusion $K \mathcal{X}_{\mathrm{m}}(\epsilon^w) \subseteq \hat{\mathbf{U}}$. Finally, we assume that the references are always bounded as $w \in \hat{\mathbf{W}}:=\{w:|w| \leq \begin{bmatrix} 5 & 5 \end{bmatrix}^{\top}\}$. Hence, we compute the vector $\epsilon^w$ such that the inclusions $K \mathcal{X}_{\mathrm{m}}(\epsilon^w) \subseteq \hat{\mathbf{U}}$ and $\WS \subseteq \hat{\mathbf{W}}$ hold. We write
	\begin{equation*}
	y(t)=\begin{bmatrix} K \\ \0 \end{bmatrix} x(t) + \begin{bmatrix} \0 \\ \I \end{bmatrix} w(t), \text{ and the output-set $\mathcal{Y}=\hat{\mathbf{U}} \times \hat{\mathbf{W}},$}
	\end{equation*}
	based on which we solve the \textit{inner-approximation} problem \eqref{eq:bilevel_problem_to_solve_1_norm_innerApp_withHD}: We approximate the mRPI set using the RPI set $\FS=\{x:Ex \leq \epsilon^x\}$, where the matrix $E$ is composed of hyperplanes defining the set $\oplus_{t=0}^5 \A^t \B \hat{\mathbf{W}}$  ($\A$,$\B$ denote the matrices of the closed-loop system). This choice results in $m_X = 240$. The result of solving this problem using the methods presented in this paper is shown in Figure \ref{figure:R1_inner_problem_supervisor_integrator}. The computed saturation bounds are $\bar{w}_1=1.6172$, $\bar{w}_2=4.0125$.
	
	 We also plot the set $K \mathcal{X}^{\gamma,\mu}$, where $\mathcal{X}^{\gamma,\mu}$ is a tight RPI approximation of the mRPI set $\mathcal{X}_{\mathrm{m}}(\epsilon^w)$ od presented in \cite{Rakovic2006}: This method considers the system $x(t+1)=Ax(t)+\tilde{w}(t)$ with $\tilde{w} \in \B\WS \oplus \gamma \mathcal{B}^4_{\infty}$. Labeling the mRPI set for this system as $\mathcal{X}_{\mathrm{m}}^\gamma(\epsilon^w)$, the tightly approximating RPI set satisfies $\mathcal{X}^{\gamma,\mu} \subseteq \mathcal{X}_{\mathrm{m}}^\gamma(\epsilon^w) \oplus \mu \mathcal{B}^4_{\infty}$. We observe that $K\FS$ characterizes a fairly tight approximation of the set $K\mathcal{X}_{\mathrm{m}}(\epsilon^w)$. Closed-loop trajectories are plotted with references $w$ sampled from the vertices of $\WS$, for which the input response satisfies the input-constraints. Hence, if $x(0) \in \FS$, the supervisory controller can command any reference $w \in \WS$ with guaranteed input-constraint satisfaction.
\begin{remark}
The mRPI set is suitable to formulate the problem in Example A since we do not have access to the state $x(t)$. If this limitation is overcome, then a reference governor scheme \cite{Garone2017} is more suitable to design the supervisory controller, which uses control invariant sets to guarantee constraint satisfaction. $\hfill\square$
\end{remark}
\subsection{Computation of input-constraint sets for output reachability}
We consider system \eqref{eq:system} with initial-state $x(0)=\0$, for which we compute the smallest input-constraint set $\WS=\{w:Fw\leq \epsilon^w\}$ with rows $F_i=[\mathrm{sin}(2\pi(i-1)/m_W) \ \mathrm{cos}(2\pi(i-1)/m_W) ]$
for each $i \in \mathbb{I}_1^{m_W}$, such that all $y \in \Y$ can be reached with control inputs $w \in \WS$. To that end, we use $\fsf(\epsilon^w)$ as a measure of the set $\WS$, and formulate the optimization problem $\mathbb{P}^N$ defined as
\begin{align*}
\epsilon^{w,N} := &\arg\min_{\epsilon^w \geq \0} 
\ \ \fsf(\epsilon^w)\\
&\qquad \hspace{5pt} \text{s.t.} \hspace{10pt} \scalemath{0.95}{y = \Sigma_{t=0}^{N-1} \C\A^t \B w_y(t)+\D w_y(N),} \ \ && \\
& \qquad \hspace{25pt} \scalemath{0.95}{w_y(t) \in \WS,} \ \scalemath{0.95}{\forall \ y \in \mathcal{Y},} \ \ \scalemath{0.95}{\forall \ t \in \mathbb{I}_0^{N},}
\end{align*}
such that $\WS[\epsilon^{w,N}]$ is the smallest input-constraint set in which there exist inputs driving the output of system \eqref{eq:system} to all $y \in \Y$ from the origin in $N$-steps. If Assumption \ref{ass:feasibility_assumptions}-\textit{Outer} holds, then $\mathbb{P}^N$ is feasible for all $N\geq n_x$. It can then be shown that the sequence of optimal values $\{\fsf(\epsilon^{w,N})\}_N$ is non-increasing, and converges to the optimal value of the problem
\begin{align}
\label{eq:smallest_input_set_prob}
\epsilon_*^{w} := \arg\min_{\epsilon^w \geq \0} 
\ \ \fsf(\epsilon^w)
\quad \hspace{5pt} \text{s.t.} \hspace{10pt} \scalemath{0.9}{ \Y \subseteq \C\mathcal{X}_{\mathrm{m}}(\epsilon^w)\oplus \D\WS},
\end{align}
where $\mathcal{X}_{\mathrm{m}}(\epsilon^w)$ is the mRPI set corresponding to $\WS$. This follows from the idea that the mRPI set is the closure of the largest $0$-reachable set \cite{Blanchini2015}.
Hence, computing the smallest input-constraint set entails solving Problem \eqref{eq:smallest_input_set_prob}. We choose $\fsf(\epsilon^w)=d_{\mathrm{H}}(\mathcal{Y}_{\mathrm{m}}(\epsilon^w),\Y)$, such that Problem \eqref{eq:smallest_input_set_prob} is equivalent to Problem \eqref{eq:orig_problem_to_solve} along with the output-set inclusion constraint. This choice ensures that we compute an input-constraint set $\WS$ whose $0$-reachable set in the output space tightly includes the target output-set $\Y$.
\begin{figure}[t]
	\centering
	\hspace{0cm}
	\vspace{-0.cm}
	\resizebox{1\linewidth}{!}{
		\begin{tikzpicture}
		\begin{scope}[shift={(0,0)}]
		\node[draw=none,fill=none](ex_1) at (0,0) {\includegraphics[trim=260 150 265 146,clip,scale=0.7]{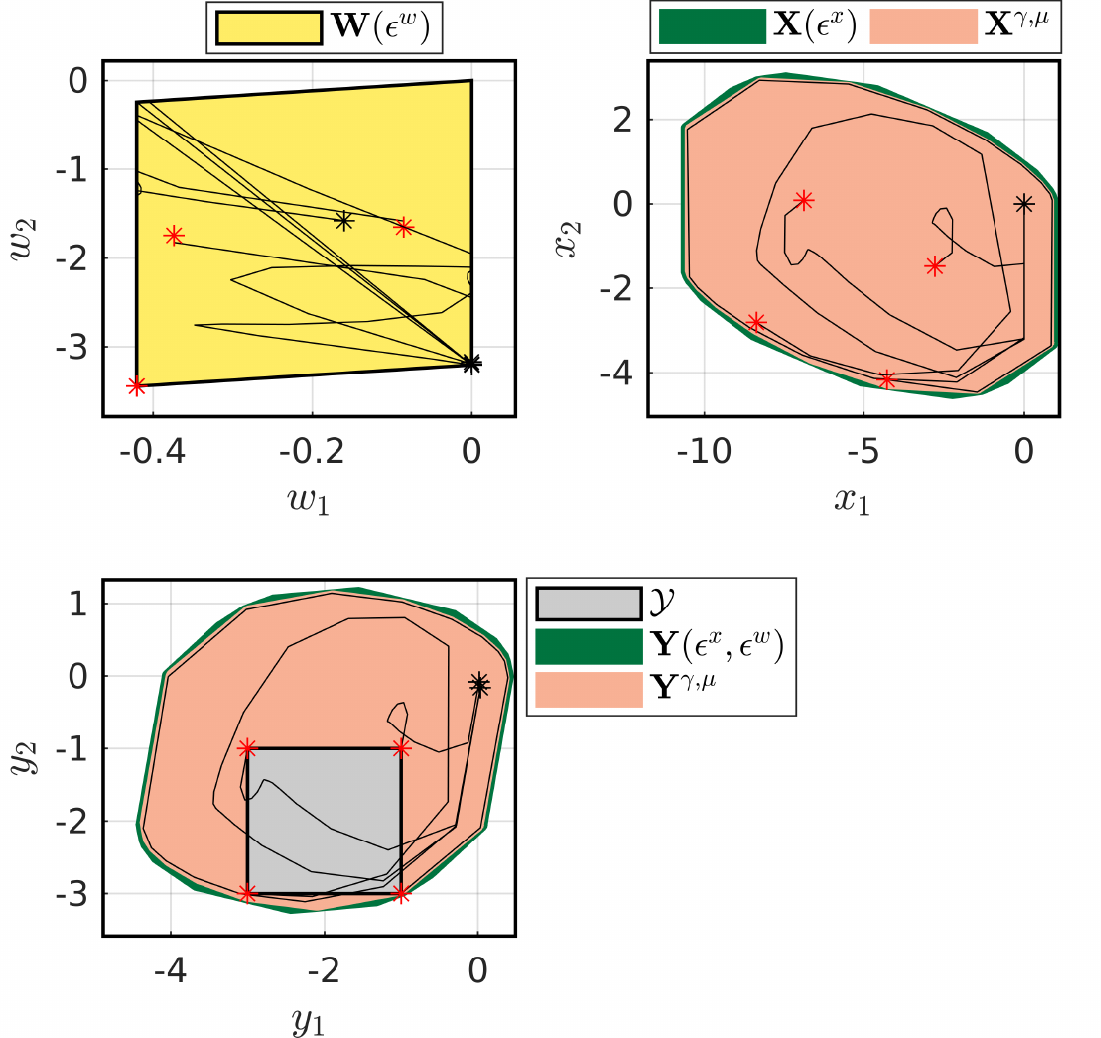}};
		\node[](box_W) [above right=-0.455cm and -5.65cm of ex_1,fill=white,minimum height=0.28cm,minimum width=0.8cm] {$$ };
		\node[](text_W) [above right=-0.55cm and -5.65cm of ex_1,fill=none,minimum size=0.2cm] {$\scalemath{0.7}{\WS}$ };
		\node[](box_X1) [above right=-0.3cm and 2.35cm of box_W,fill=white,minimum height=0.28cm,minimum width=0.7cm] {$$ };
		\node[](text_X1) [above right=-0.47cm and 2.25cm of text_W,fill=none,minimum size=0.2cm] {$\scalemath{0.7}{\FS}$ };
		\node[](box_X2) [above right=-0.3cm and 0.75cm of box_X1,fill=white,minimum height=0.28cm,minimum width=0.55cm] {$$ };
		\node[](text_X2) [above right=-0.4cm and 0.7cm of text_X1,fill=none,minimum size=0.2cm] {$\scalemath{0.7}{\mathcal{X}^{\gamma,\mu}}$ };
		\node[](box_Y) [below right=3.82cm and 1.5cm of box_W,fill=white,minimum height=0.9cm,minimum width=1cm] {$$ };
		\node[](text_Y) [above right=-0.34cm and -1.05cm of box_Y,fill=none,minimum size=0.2cm] {$\scalemath{0.7}{\mathcal{Y}}$ };
		\node[](text_Yo) [below right=-0.1cm and -0.4cm of text_Y,fill=none,minimum size=0.2cm] {$\scalemath{0.7}{\mathbb{Y}(\epsilon^x,\epsilon^w)}$ };
		\node[](text_Ym) [below right=-0.17cm and -1.12cm of text_Yo,fill=none,minimum size=0.2cm] {$\scalemath{0.7}{\mathcal{Y}^{\gamma,\mu}}$ };
		\node[](Details) [below right=0.6cm and -1cm of text_Yo,fill=white,draw=black,inner sep=2pt] { $\scalemath{0.7}{\begin{matrix} \text{\underline{Parameters}} \\m_X=30, \ m_W=6, \\ \gamma=10^{-5}, \ \mu = 10^{-6}\end{matrix}}$ };
		\end{scope}
		\end{tikzpicture}}
	\captionsetup{width=1\linewidth}
	\caption{Results of solving the  \textit{outer-approximation} problem. Input, state and output trajectories are plotted with $w(0),x(0),y(0)$ denoted by black $*$, $w(100),x(100),y(100)$ denoted by red $*$. Observe that the vertices of $\mathcal{Y}$ are reachable from $x(0)=\0$ with $w \in \WS$. }
	\label{figure:R1_outer_problem_simple}
\end{figure}
 We approximately solve Problem \eqref{eq:smallest_input_set_prob} based on the \textit{outer-approximation} formulation in Problem \eqref{eq:bilevel_problem_to_solve_1_norm_outerApp}: we approximate the mRPI set using the polytopic RPI set $\FS=\{x:Ex \leq \epsilon^x\}$  with rows $E_i=[\mathrm{sin}(2\pi(i-1)/m_X) \ \mathrm{cos}(2\pi(i-1)/m_X) ]$
 for each $i \in \mathbb{I}_1^{m_X}$. Using this set, we formulate Problem \eqref{eq:bilevel_problem_to_solve_1_norm_outerApp_withHD}. The results of solving this problem using the methods presented in this paper are shown in Figure \ref{figure:R1_outer_problem_simple}. We consider system \eqref{eq:system} with matrices
 \begin{align*}
 &\scalemath{0.98}{\A=\begin{bmatrix} 0.8966 & 0.8822 \\ -0.2068 & 0.3244 \end{bmatrix},\ \ \
 \B=\begin{bmatrix} 0 & 0 \\ -1 & 1 \end{bmatrix},} \\
 &\scalemath{0.98}{\C=\begin{bmatrix}
 0.4 & 0.1 \\ 0.1 & 0.6 \end{bmatrix}, \ \ \ \qquad \qquad
 \D=\begin{bmatrix}  0.001 &  -0.01 \\
 0.003&   0.05
 \end{bmatrix}},
 \end{align*} 
and the target output-set $\mathcal{Y}=\{[-2 \ -2]^{\top}\}\oplus \mathcal{B}^2_{\infty}$.
This system is the closed-loop form of the standard double-integrator with feedback gain $K=[0.2068    \ 0.6756]$.
We choose $N=100$ in the formulation of Problem \eqref{eq:bilevel_problem_to_solve_1_norm_outerApp_withHD}. We see that the computed set $\WS$ is such all $y \in \Y$ are reachable from the origin. We also plot tight approximation RPI set $\mathcal{X}^{\gamma,\mu}$ of the mRPI set $\mathcal{X}_{\mathrm{m}}(\epsilon^w)$ using the methods presented in \cite{Rakovic2006}, in a manner similar to the previous example. We observe through the set $\mathcal{Y}^{\gamma,\mu}:=\C\mathcal{X}^{\gamma,\mu} \oplus \D\WS$ that $\Y \subseteq \mathcal{Y}^{\gamma,\mu}\subseteq \mathbb{Y}(\epsilon^x,\epsilon^w)$ holds, thus ensuring the desired reachability.

In conclusion, one can design feedback controllers to select inputs $w$ from the input-constraint set $\WS$, with the guarantee that for any $x(0) \in \mathcal{X}_{\mathrm{m}}(\epsilon^w)$, there always exist feasible inputs to reach every target output $y \in \mathcal{Y}_{\mathrm{m}}(\epsilon^w)\supset \Y$. In Figure \ref{figure:R1_outer_problem_simple}, we also plot state, input and output trajectories with $x(0)=\0$ and $y(100) \in \Y$ to demonstrate the reachability.
	\section{Conclusions}
We have presented a method for computing an input disturbance set for discrete-time linear time-invariant systems such that the reachable set of outputs approximates an assigned set. To that end, we formulated an optimization problem in order to minimize the approximation error. Finally, we presented some numerical results to demonstrate the feasibility of the approach and two possible practical applications.
Future research will further develop the solution algorithm by considering: (a) alternative solution methods such as, e.g.,  value function approaches \cite{Bard2006}; (b) optimizing also over matrices $E$ and $F$. Finally, the potential of this technique when applied to feedback controller synthesis and to system identification problems will be investigated.

	\bibliography{references}

{\color{magenta}
}
\end{document}